\def \F{I\!\!F}
\def \H{I\!\!H}
\def \R{I\!\!R}
\def\Fc{{\cal F}}
\def\Dzw1#1{\frac{\partial^2 #1}{\partial z \partial w_1}}
\def\Dzb1#1{\frac{\partial^2 #1}{\partial z \partial b_1}}
\numberwithin{equation}{section}
\newtheorem{definition}{Definition}[section]
\newtheorem{theorem}[definition]{Theorem}
\newtheorem{remark}[definition]{ \it Remark}
\newtheorem{coro}[definition]{Corollary}
\newtheorem{proposition}[definition]{Proposition}
\newtheorem{lemma}[definition]{Lemma}
\def\R{{\bf R}}
\def\1B{\text{1\!\!I}}
\def\R{{\bf R}}
\def\1B{\text{1\!\!I}}
\def\vp{\stackrel{\vee}{\pi}}
\def\p{\partial}
\def\v{\varphi}
\def\g{\gamma}
\def\s{\sigma}
\def\a{\alpha}
\def\b{\beta}
\def\d{\delta}
\def\vp{\varepsilon}
\def\cal#1{\mathcal{#1}}
\def \H{\mathbb {H}}
\def \R{\mathbb {R}}
\def\dd{\displaystyle}
\begin{document}

\title{Optimal stopping for dynamic risk measures with jumps and obstacle problems}
\author{ Roxana DUMITRESCU\thanks{CEREMADE,
Universit\'e Paris 9 Dauphine, CREST  and  INRIA Paris-Rocquencourt, email: \textbf{roxana@ceremade.dauphine.fr}} \and Marie-Claire Quenez\thanks{LPMA, 
Universit\'e Paris 7 Denis Diderot, Boite courrier 7012, 75251 Paris Cedex 05, France, 
email: {\tt quenez@math.univ-paris-diderot.fr}}
\and 
Agn\`es SULEM
\thanks{ INRIA Paris-Rocquencourt, Domaine de Voluceau, Rocquencourt, BP 105, Le Chesnay Cedex, 78153, France, and Universit\'e Paris-Est, F-77455 Marne-la-Vall\'ee, France, email: {\tt agnes.sulem@inria.fr}}}

\date{\today}

\maketitle

\begin{abstract}

We  study  the optimal stopping problem for a monotonous dynamic risk measure  induced by a BSDE with jumps in the Markovian case. We show that the value function is a  viscosity solution  of an obstacle problem   for a 
partial integro-differential variational inequality, 
and we provide an uniqueness result for this obstacle problem.
\end{abstract}

\textbf{Key-words:} Dynamic risk-measures, optimal stopping, obstacle problem, reflected backward stochastic differential equations with jumps,viscosity solution, comparison principle, partial integro-differential variational inequality


\newpage
\section{Introduction}

In the last years, there has been  several studies on dynamic risk measures and their links with nonlinear  backward stochastic differential equations (BSDEs). We recall that nonlinear BSDEs have been introduced  in  \cite{14}   in a  Brownian framework,  in order to provide a probabilistic representation of semilinear parabolic partial-differential equations. BSDEs with jumps and their links with partial integro-differential equations  are studied  in \cite{2}. A comparison theorem is established in \cite{18}  and generalized in \cite{16}, where properties of  dynamic risk measures induced by  BSDEs with jumps are also provided. 
  An optimal stopping problem for  such risk measures is addressed in  \cite{17},
and  the value function is characterized as the solution of a reflected BSDE with jumps and RCLL obstacle process.

 In the present paper, we  focus on the optimal stopping problem for dynamic risk measures induced by BSDEs with jumps  in a Markovian framework. In this case the driver of the BSDE depends on a given state process
 $X$, which can represent, for example, an index or  a stock  price. This process will be assumed to be driven by a Brownian motion and a Poisson random measure.

 Our main contribution consists in establishing the link between the value function of our optimal stopping problem and  parabolic partial integro-differen\-tial variational inequalities (PIDVIs).  We prove that the minimal risk measure, which corresponds to the solution of a reflected BSDE with jumps, is a viscosity solution of a PIDVI. This provides an existence result for the obstacle problem under relatively 
 weak assumptions. 
In the Brownian case, this result was obtained   in  \cite{10} by using a penalization method via non-reflected BSDEs.
 Note that this method could also be adapted to our  case with jumps,  but  would involve heavy computations in order to  prove the convergence of the solutions of the penalized BSDEs to the solution of the reflected BSDE. It would also   require some  convergence results of the viscosity solutions theory in the integro-differential case.
  We provide here instead a direct and shorter proof.

Furthermore,  under some  additional assumptions, we prove a comparison theorem in the class of bounded continuous functions, relying on a 
non-local version of Jensen-Ishii Lemma (see \cite{1}), from which the uniqueness of the viscosity solution follows. 
We point out that our problem is not covered by the study in \cite{1},  since we are dealing with nonlinear BSDEs, and this  leads to a more complex integro-differential 
operator in the associated PDE.

 The paper is organized as follows: In Section \ref{sec2}  we give the formulation of our optimal stopping problem. In Section  \ref{sec3}, we prove that the value function is  a solution of an obstacle problem for a PIDVI in the viscosity sense. In  Section  \ref{sec4},  we establish an uniqueness result.  In the Appendix, we prove  some  estimates, from which we derive that  the value function is continuous and has polynomial growth and provide some  complementary results.

\section{Optimal Stopping Problem for Dynamic Risk Measures with Jumps in the Markovian Case}\label{sec2}

Let $(\Omega,  \Fc_, P)$ be a probability space.
Let  $W$ be   a  one-dimensional Brownian motion and  $N(dt,du)$ be a Poisson random measure with compensator $\nu(du)dt$ such that
$\nu$ is a $\sigma$-finite measure on $\R^*$ equipped with its Borel field $ \bf {\cal B}(\R^*),$ and satisfies $\int_{\mathbb{R}^*}(1 \wedge e^2) \nu(de) < \infty$.
 Let $\tilde N(dt,du)$ be its compensated process.
Let  $\F = \{\Fc_t , t \geq 0 \}$
be  the natural filtration associated with $W$ and $N$.

We consider a state process $X$ which may be interpreted 
as an index, an interest rate process, an economic factor, an indicator of the market or  the value of a portfolio, which has an influence on the risk measure and the position.
%
For each initial time $t \in [0,T]$ and each condition $x \in \mathbb{R}$, let $X^{t,x}$ be the solution of the following stochastic differential equation (SDE):
\begin{equation}\label{SDE}
X_s^{t,x}=\dd x+ \int_t^s b(X_r^{t,x})dr+\int_t^s \sigma(X_r^{t,x})dW_r+\int_t^s \int_{\mathbb{R}^*} \beta(X_{r^{-}}^{t,x},e) \Tilde{N}(dr,de),
\end{equation}
where
$b, \sigma :\mathbb{R} \rightarrow \mathbb{R}$  are   Lipschitz continuous,  and
$\beta : \mathbb{R} \times\mathbb{R}^* \rightarrow \mathbb{R}$  is a measurable function  such that for some non negative real $C$, and for all $e \in \mathbb{R}$
\begin{align*}
&|\beta(x,e)| \leq C(1 \wedge |e|),  \;\; x \in \mathbb{R}\\
&|\beta(x,e)- \beta(x',e)| \leq C|x-x'|(1 \wedge |e|), \;\;  x, x'\in \mathbb{R}.
\end{align*}

We  introduce a dynamic risk measure $\rho$ induced by a BSDE with jumps. For this, we consider two functions
 $\gamma$ and $f$  satisfying the following assumption:\\
 
{\bf Assumption $\textbf{H}_1$}
\begin{itemize}
\item[$\bullet$]
$\gamma: \mathbb{R} \times \mathbb{R}^* \rightarrow \mathbb{R}$ is $ \mathcal{B}(\mathbb{R}) \otimes \mathcal{B}(\mathbb{R}^*)$-measurable,   \\
$|  \gamma(x,e)-\gamma(x',e)| < C|x-x'|(1 \wedge |e|), x, x' \in \mathbb{R}, e \in \mathbb{R}^*$\\
  $-1 \leq \gamma(x,e) \leq C(1 \wedge |e|)$, $e \in \mathbb{R}^*$
\item[$\bullet$]  $f: [0,T] \times  \mathbb{R}^3 \times  L_{\nu}^2  \rightarrow \mathbb{R}$
is continuous in $t$ uniformly with respect to $x,y,z,k$,  and continuous in $x$ uniformly  with respect to $y,z,k$.

\begin{itemize}

\item[(i)]
 $|f(t,x,0,0,0)| \leq C(1+x^p), \;  \forall x\in\mathbb{R}$

\item[(ii)]
$ |f(t,x,y,z,k)- f (t,x',y',z',k')| \leq C(|y-y'|+|z-z'|+\|k-k'\|_{L_{\nu}^2})$, $\forall \;  t \in [0, T]$, $y, y', z,z' \in \mathbb{R}$, $k,k' \in L_{\nu}^2$

\item[(iii)]
$ f(t,x,y,z,k_1)- f (t,x,y,z,k_2) \geq <\gamma(x,\cdot),k_1-k_2>_{\nu}, \forall t,x,y,z,k_1,k_2.$
\end{itemize}

\end{itemize}

Here, $L_{\nu}^2$ denotes  the set of Borelian functions $\ell: \R^* \rightarrow \R$ such that  $\|\ell\|_{\nu}^2 :=\int_{\R^*}  |\ell(u) |^2 \nu(du) < + \infty.$
It is a 
 Hilbert space equipped with the scalar product 
$\langle \delta    , \, \ell \rangle_\nu := \int_{\mathbb{R}^*} \delta(e) \ell(e) \nu(de)$ for all $  \delta  , \, \ell \in {L}^2_\nu \times {L}^2_\nu.$ 

We also  introduce the set
 $\mathbb{H}^{2}$ (resp. $\mathbb{H}^{2}_{\nu}$)  of predictable processes ($\pi_t$) (resp. $(l_t(\cdot))$) such that $\mathbb{E}\int_0^T \pi_s^2ds {<} \infty$ (resp. $\mathbb{E}\int_0^T \!\|l_s\|_{L_{\nu}^2}^2ds{<}\infty$); the set
  $\mathcal{S}^2$ of real-valued RCLL adapted processes $(\varphi_s)$  with $\mathbb{E}[\sup_s \varphi_s^2] < \infty$, and the set
  $L^2({\cal F}_T)$    of $\Fc _T$-measurable and square-integrable random variables.

Let $(t,x)$ be a fixed intial condition. For each maturity $S$  in $[t,T]$ and each position $\zeta$  in $L^2({\mathcal{F}_S})$, the associated risk measure at time $s \in [t,S]$ is defined by
\begin {equation}
\rho_s^{t,x}(\zeta, S):=-\mathcal{E}_{s,S}^{t,x}(\zeta),  \; t \leq s \leq S,
\end{equation}
where $\mathcal{E}^{t,x}_{\cdot,S}(\zeta)$ denotes the $f$-conditional expectation, starting at $(t,x)$, defined as the solution in $\mathcal{S}^{2}$
 of the BSDE with Lipschitz driver $f(s, X_s^{t,x},y,z,k)$, terminal condition $\zeta$ and terminal time $S$, that is the solution $(\mathcal{E}_s^{t,x})$ of
\begin{equation}\label{2.2}
-d\mathcal{E}_s=f(s, X_s^{t,x},\mathcal{E}_s,\pi_s,l_s(\cdot))ds-\pi_sdW_s-\int_\mathbb{R^*} l_s(u) \Tilde{N}(dt,du)\,; \,\,\mathcal{E}_S=\zeta,
\end{equation}
where $(\pi_s)$, $(l_s)$ are the associated processes, which belong to $\mathbb{H}^{2}$ and $\mathbb{H}^{2}_{\nu}$ respectively. \\
The functional $\rho:(\zeta,S) \rightarrow \rho_{\cdot}(\zeta,S)$ defines then a dynamic risk measure induced by the BSDE with driver $f$ (see \cite{16}).
Assumption $\textbf{H}_1$ implies that the driver $f(s, X_s^{t,x},y,z,k)$ satisfies Assumption 3.1 in \cite{17}, which ensures the monotonocity property of $\rho$ with respect to $\zeta$. 
More precisely, for each maturity $S$ and  for each positions $\zeta_1$, $\zeta_2 \in {L}^2(\mathcal{F}_S)$, with $\zeta_1 \leq \zeta_2$ a.s., we have
$\rho_s^{t,x}(\zeta_1, S) \geq \rho_s^{t,x}(\zeta_2, S)$ a.s.

 We now formulate our  optimal stopping problem for dynamic risk measures. For each $(t,x) \in [0,T] \times \mathbb{R}$, we consider a dynamic financial position  given by the process $\left(\xi_s^{t,x}, t \leq s \leq T \right)$, defined  via the state process $(X_s^{t,x})$ and two  functions $g$ and $h$ such that
\begin{itemize}
\item[$\bullet$]
$g \in \mathcal{C}(\mathbb{R})$ with at most polynomial growth at infinity,

\item[$\bullet$]
$
h:[0,T] \times \mathbb{R} \rightarrow \mathbb{R}
$
is  continuous in $t$, $x$ and there  exist $p\in \mathbb{N}$ and a real constant  $C$, such that
\begin{equation}\label{4.3}
 |h(t,x)| \leq C(1+ |x|^p), \forall t \in [0,T], x \in \mathbb{R}, 
\end{equation}

\item[$\bullet$]
$h(T,x) \leq g(x),  \;\; \forall x \in \mathbb{R}.$
\end{itemize}
For each initial condition $(t,x) \in [0,T] \times \mathbb{R}$, the dynamic position is then  defined by:
\begin{eqnarray*}
\begin{cases}
\xi^{t,x}_s :=h(s,X_s^{t,x}), \,\,\, s <T\\
\xi^{t,x}_T :=g(X^{t,x}_T).\\
\end{cases}
\end{eqnarray*}

Let $t \in [0,T]$ be the initial time and let $x \in \mathbb{R}$ be the initial condition. 
The minimal risk measure at time $t$ is  given by:
\begin{equation}\label{vs}
{\rm ess} \inf_{\tau \in \mathcal{T}_t} \rho_t^{t,x} (\xi_{\tau}^{t,x}, \tau)=- {\rm ess} \sup_{\tau \in \mathcal{T}_t} \mathcal{E}_{t, \tau}^{t,x}(\xi_\tau^{t,x}).
\end{equation}
Here $\mathcal{T}_t$ denotes the set of stopping times with values in $[t,T]$.

By Th. 3.2 in \cite{17}, the minimal risk measure  is  characterized via the solution $Y^{t,x}$  in $\mathcal{S}^2$ of the following reflected BSDE  (RBSDE) associated with driver $f$ and obstacle $\xi$:
\begin{equation}\label{markRBSDE}
\begin{cases}
Y^{t,x}_s=g(X^{t,x}_T)+\dd\int_{s}^{T}f(r,X^{t,x}_r,Y^{t,x}_r,Z^{t,x}_r, K^{t,x}_r(\cdot))dr+ A^{t,x}_T-A^{t,x}_s\\\qquad\dd-\int_s^T Z^{t,x}_rdW_r-\int_s^T \dd\int_{\mathbb{R}^*}K^{t,x}(r,e) \Tilde{N}(dr,de)\\
Y^{t,x}_s \geq \xi^{t,x}_s, 0 \leq s \leq T\; \text{ a.s. }\\
A^{t,x}  \text{ is a nondecreasing, continuous predictable process in $\mathcal{S}^2$ with } \\\qquad \qquad A^{t,x}_t=0 \text{ and such that }\\
\dd\int_{t}^{T}(Y^{t,x}_s-\xi^{t,x}_s)dA^{t,x}_s=0 \text{ a.s. \,,  }
\end{cases}
\end{equation}
with $Z^{t,x}, K^{t,x} \in \mathbb{H}^2$ (resp. $\mathbb{H}_{\nu}^2$).
Note that by the  assumptions made on $h$ and $g$, the obstacle $(\xi_s^{,t,x})_{s \geq t}$ is continuous except at the inaccessible jump times of the Poisson measure, and at time $T$ with $\Delta \xi_T^{t,x} \leq 0$ a.s., 
and this implies   the continuity of $A^{t,x}$ by Th. 2.6 in \cite{17}.
Moreover, 
 Th. 3.2 in \cite{17} ensures that
\begin{equation}
Y_t^{t,x} = 
{\rm ess} \sup_{\tau \in \mathcal{T}_t} \mathcal{E}_{t, \tau}^{t,x}(\xi_\tau^{t,x}) \quad \text{a.s.}
\end{equation}
The SDE \eqref{SDE} and the RBSDE \eqref{markRBSDE} can be solved with respect to the translated 
Brownian motion $(W_s - W_t)_{s \geq t}$.  Hence $Y_t^{t,x}$ is constant for each $t,x$. We can thus define 
a deterministic function $u$ called {\em value function} of our optimal stopping problem by setting for each $t,x$
\begin{equation}\label{DEF}
u(t,x):=  Y_t^{t,x}.
\end{equation}
 By Lemma $\ref{conti}$ and Lemma $\ref{polyn}$ given in Appendix, the  function $u$ is continuous and has at most polynomial growth. \\ The continuity of  $u$ implies that $Y_s^{t,x}= u(s, X_s^{t,x})$, $t \leq s \leq T$ a.s.

Moreover, the stopping time $\tau^{*,t,x}$ (also denoted by $\tau^*$), defined by
$$\tau^{*}:= \inf \{ s \geq t,\,\, Y_s^{t,x} = \xi_s^{t,x}\}=  \inf \{ s \geq t,\,\, u(s, X_s^{t,x})=     \bar h(s, X_s^{t,x})\}$$
is an optimal stopping  time  for \eqref{vs}  
(see Th. 3.6  in \cite{17}).
Here, the function $\bar h$ is defined by $ \bar h (t,x) :=h(t,x) {\bf 1}_{t<T} + g(x) {\bf 1}_{t=T} $, so that $\xi_s^{t,x} = \bar h(s, X_s^{t,x})$, $0 \leq t \leq T$ a.s.

In the next section, we prove  that the value function is a viscosity solution of an obstacle problem.

\section{The Value Function, Viscosity Solution of an Obstacle Problem}\label{sec3}


We  consider the following related obstacle problem for a parabolic PIDE:
\begin{equation}\label{4.8}
\begin{cases}
 \min(u(t,x)-h(t,x),  -\dfrac{\p u}{\p t}(t,x)-Lu(t,x)-f(t,x,u(t,x), (\sigma \dfrac{\p u }{\p x})(t,x),Bu(t,x) )=0, \\ \quad (t,x) \in [0,T[ \times \mathbb{R}\\
u(T,x)=g(x), \;  x\in \mathbb{R}
\end{cases}
\end{equation}
where
\begin{align}
&L :=A+K, \nonumber \\
&A\phi(t,x) := \dfrac{1}{2}\sigma^2(x)\dfrac{\p^2 \phi}{\p x^2}(t,x)+ b(x) \dfrac{\p \phi }{\p x}(t,x), \nonumber  \\
&K \phi(t,x) :=\int_{\mathbb{R}^*}\left(\phi(t,x+ \beta(x,e))-\phi(t,x)- \dfrac{\p \phi}{\p x}(t,x)\beta(x,e)\right) \nu (de),    \label{defK3}  \\
&B \phi(t,x)(\cdot) :=\phi(t,x+\beta(x,\cdot))-\phi(t,x)  \in {L}_{\nu}^2.  \nonumber 
\end{align}
The operator  $B$ and $K$  are well defined for $\phi \in  C^{1,2}([0,T] \times \mathbb R)$. Indeed, since $\beta$ is bounded, we have 
$ |\phi(t,x+\beta(x,e))-\phi(t,x)| \leq C |\beta(x,e)|$ and
$$|\phi(t,x + \beta(x,e))-\phi(t,x)- \dfrac{\p \phi}{\p x}(t,x)\beta(x,e)| \leq C \beta(x,e)^2.$$
%

We prove below  that the value function $u$ defined by $\eqref{DEF}$ is a  viscosity solution of the above obstacle problem.
\begin{definition}\rm
$\bullet$ A continuous function $u$ is said to be a {\em viscosity subsolution} of \eqref{4.8} iff $u(T,x)\leq g(x), x\in \mathbb{R}$, and iff for any point $(t_0,x_0) \in (0,T) \times \mathbb{R}$ and for any $\phi \in  C^{1,2}([0,T] \times \mathbb{R})$ such that $\phi(t_0,x_0)=u(t_0,x_0)$ and $\phi-u$ attains its minimum at $(t_0,x_0)$, we have
\begin{align*}
&\min(u(t_0,x_0)-h(t_0,x_0),\\ &-\dfrac{\p \phi}{\p t}(t_0,x_0)-L\phi(t_0,x_0)-f(t_0,x_0,u(t_0,x_0), (\sigma \dfrac{\p \phi}{\p x})(t_0,x_0),B \phi(t_0,x_0)) \leq 0.
\end{align*}
In other words, if $u(t_0,x_0)>h(t_0,x_0)$, then 
\begin{equation*}
-\dfrac{\p \phi}{\p t} (t_0,x_0)-L\phi(t_0,x_0)-f(t_0,x_0,u(t_0,x_0),  (\sigma \dfrac{\p \phi}{\p x} )(t_0,x_0),B \phi(t_0,x_0)) \leq 0.
\end{equation*}

$\bullet$
 A continuous function $u$  is said to be a {\em  viscosity supersolution } of \eqref{4.8} iff $u(T,x)\geq g(x), x\in \mathbb{R}$, and  iff for any point $(t_0,x_0) \in (0,T) \times \mathbb{R}$ and for any $\phi \in C^{1,2}([0,T] \times \mathbb{R})$ such that $\phi(t_0,x_0)=u(t_0,x_0)$ and $\phi-u$ attains its maximum at $(t_0,x_0)$, we have
\begin{align*}
&\min(u(t_0,x_0)-h(t_0,x_0),\\& -\dfrac{\p \phi}{\p t} (t_0,x_0)-L \phi(t_0,x_0)-f(t_0,x_0,u(t_0,x_0),  (\sigma \dfrac{\p \phi}{\p x})(t_0,x_0),B \phi (t_0,x_0)) \geq 0.
\end{align*}
In other words, we have both $u(t_0,x_0) \geq h(t_0,x_0)$, and
\begin{equation*}
-\dfrac{\p \phi}{\p t}(t_0,x_0)-L \phi(t_0,x_0)-f(t_0,x_0,u(t_0,x_0), (\sigma \dfrac{\p \phi}{\p x} )(t_0,x_0),B \phi(t_0,x_0)) \geq 0.
\end{equation*}

\end{definition}

\begin{theorem}
The function $u$, defined by \eqref{DEF}, is a viscosity solution (i.e. both a viscosity sub- and supersolution)  of the  obstacle problem~\eqref{4.8}.

\end{theorem}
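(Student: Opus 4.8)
The plan is to exploit the identity $Y^{t,x}_s=u(s,X^{t,x}_s)$ together with the RBSDE \eqref{markRBSDE}, and to reduce the statement to two separate arguments for the subsolution and the supersolution property, the terminal condition $u(T,x)=g(x)$ being immediate from \eqref{DEF}. The central tool is the comparison theorem for BSDEs with jumps, available here thanks to Assumption $\textbf{H}_1$(iii) with $\gamma\geq -1$: it lets one linearize a difference of drivers as a bounded drift in $y$, a bounded coefficient in front of $Z$, and a jump term $\langle\gamma(X_r,\cdot),\cdot\rangle_\nu$ with $1+\gamma\geq 0$, hence represent the difference of two solutions through a nonnegative adjoint weight. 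The second ingredient is Itô's formula applied to $\phi(s,X^{t,x}_s)$ for a test function $\phi\in C^{1,2}$: it shows that $\phi(s,X_s)$ solves, on a short interval, a BSDE with driver $-\big(\partial_t\phi+L\phi\big)(s,X_s)$, diffusion coefficient $(\sigma\partial_x\phi)(s,X_s)$ and jump coefficient $B\phi(s,X_{s^-})$, the operators $A,K,B$ being precisely those produced by the decomposition of the generator of $X$.

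For the supersolution property, fix $(t_0,x_0)$ and $\phi$ with $\phi(t_0,x_0)=u(t_0,x_0)$ such that $\phi-u$ attains its maximum at $(t_0,x_0)$. The inequality $u(t_0,x_0)\geq h(t_0,x_0)$ is just the obstacle constraint $Y\geq\xi$ read at $t_0$. For the PIDE inequality I would argue by contradiction, assuming $-\partial_t\phi-L\phi-f(t_0,x_0,u,\sigma\partial_x\phi,B\phi)<0$ at $(t_0,x_0)$. On a small stochastic interval $[t_0,\tau]$, with $\tau$ the minimum of $t_0+\delta$ and the first exit time of $X$ from a neighborhood of $x_0$, the process $Y$ is a supersolution of the BSDE with driver $f$ because $A$ is nondecreasing, whereas $\phi(s,X_s)$ is, by continuity of the driver in $(s,x)$, a strict subsolution. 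Since $\phi\leq u$ gives $\phi(\tau,X_\tau)\leq Y_\tau$, the comparison theorem in its strict form yields $\phi(t_0,x_0)<Y_{t_0}=u(t_0,x_0)$, contradicting $\phi(t_0,x_0)=u(t_0,x_0)$. Hence the PIDE inequality holds and $u$ is a supersolution.

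For the subsolution property, fix $\phi$ with $\phi(t_0,x_0)=u(t_0,x_0)$ and $\phi-u$ minimal at $(t_0,x_0)$. If $u(t_0,x_0)=h(t_0,x_0)$ the minimum in \eqref{4.8} is nonpositive and there is nothing to prove, so assume $u(t_0,x_0)>h(t_0,x_0)$. By right-continuity of $Y$ and $\xi=\bar h(\cdot,X)$ and the strict inequality $Y_{t_0}>\xi_{t_0}$, there is a stopping time $\tau>t_0$ with $Y>\xi$ on $[t_0,\tau)$; the Skorokhod condition $\int(Y-\xi)\,dA=0$ together with the continuity of $A$ then forces $A$ to be constant on $[t_0,\tau]$, so that $Y$ solves a genuine (non-reflected) BSDE with driver $f$ there. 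Assuming for contradiction that the PIDE expression is strictly positive at $(t_0,x_0)$, the process $\phi(s,X_s)$ becomes a strict supersolution of this BSDE, while $\phi\geq u$ gives $\phi(\tau,X_\tau)\geq Y_\tau$; the strict comparison theorem then gives $\phi(t_0,x_0)>u(t_0,x_0)$, a contradiction. Thus $u$ is a subsolution, and therefore a viscosity solution of \eqref{4.8}.

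The step I expect to be the main obstacle is the treatment of the nonlocal terms, and it is where the jump setting genuinely differs from the Brownian one. Since $X$ can jump by a bounded but non-infinitesimal amount ($|\beta|\leq C$), at the stopping time $\tau$ the state $X_\tau$ may lie far from $x_0$, so local touching of $\phi$ and $u$ at $(t_0,x_0)$ is not enough to secure the terminal comparison $\phi(\tau,X_\tau)\lessgtr Y_\tau$ that launches the argument. I would therefore work with test functions for which the extremum of $\phi-u$ is global, i.e.\ $\phi\leq u$ (resp.\ $\phi\geq u$) on all of $\mathbb{R}$; this is the natural convention for nonlocal equations, is consistent with $K\phi$ and $B\phi$ already probing $\phi$ away from $x_0$, and is exactly what makes the terminal inequality valid after a jump. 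The second delicate point, again specific to the presence of jumps, is that the comparison must absorb the difference between the true jump component $K^{t,x}_r$ and $B\phi$: it is precisely Assumption $\textbf{H}_1$(iii), with $\gamma\geq -1$, that provides a nonnegative jump multiplier and thereby the validity of the strict comparison used in both parts.
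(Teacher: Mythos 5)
Your proposal is correct and follows essentially the same route as the paper: reduce to a non-reflected BSDE on a small stochastic interval (using the Skorokhod condition and continuity of $A$ for the subsolution part, and the nondecreasingness of $A$ as a generalized driver for the supersolution part), represent $\phi(s,X_s^{t_0,x_0})$ via It\^o's formula as the solution of a BSDE with driver $-(\partial_t\phi+L\phi)$, and conclude by the strict comparison result for BSDEs with jumps (Proposition \ref{A.4}), whose validity rests exactly on $\gamma\geq -1$ as you note. Your two flagged ``delicate points'' --- the need for global extrema of $\phi-u$ because of the nonlocal terms, and the role of Assumption $\textbf{H}_1$(iii) in the strict comparison --- are precisely the conventions and tools the paper adopts.
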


\begin{proof}

$\bullet$    We first prove that $u$  is a subsolution of \eqref{4.8}.

Let $(t_0,x_0) \in (0,T) \times \mathbb{R}$ and $\phi \in  C^{1,2}([0,T] \times \mathbb{R})$ be such that\linebreak $\phi(t_0, x_0)=u(t_0,x_0)$  and $\phi(t,x) \geq u(t,x)$, $\forall (t,x) \in [0,T] \times \mathbb{R}$.
Suppose by contradiction that $u(t_0,x_0)>h(t_0,x_0)$ and that
\begin{equation*}
 -\dfrac{\p \phi}{\p t} (t_0,x_0)-L\phi(t_0,x_0)-f(t_0,x_0, \phi (t_0,x_0), (\sigma \dfrac{\p \phi}{\p x})(t_0,x_0),B \phi(t_0,x_0))>0.
\end{equation*}

By continuity of $K \phi$ (which  can be shown using Lebesgue's theorem) and that of $B \phi:[0,T] \times \mathbb{R} \rightarrow {L}_{\nu}^2$, we can suppose that there exists $\vp>0$ and $\eta_\vp>0$ such that:
$\forall (t,x)$ such that $t_0 \leq t \leq t_0+ \eta_\vp<T$ and $|x-x_0| \leq \eta_\vp$, we have: $u(t,x) \geq h(t,x) + \vp$ and
\begin{equation}\label{4.9}
-\dfrac{\p \phi}{\p t}(t,x)-L \phi(t,x)-f(t,x, \phi (t,x), (\sigma \dfrac{\p \phi}{\p x})(t,x),B \phi(t,x)) \geq\vp.
\end{equation}
Note that $Y_s^{t_0,x_0}=Y_s^{s, X_s^{t_0,x_0}}=u(s,X_s^{t_0,x_0})$ a.s. because $X^{t_0,x_0}$ is a Markov process and $u$ is continuous.
We define the stopping time $\theta$ as:
\begin{equation}\label{4.10}
\theta:= (t_0 + \eta_\vp) \wedge \inf\{s \geq t_0, |X_s^{t_0,x_0}-x_0|> \eta_\vp \}.
\end{equation}
By definition of the stopping time $\theta$, \\
\begin{equation*}
u(s,X_s^{t_0,x_0}) \geq h(s,X_s^{t_0,x_0})+ \vp> h(s,X_s^{t_0,x_0}), t_0 \leq s < \theta \text{  a.s. }
\end{equation*}
This means that for a.e. $\omega$ the process $(Y_s^{t_0,x_0}(\omega), s \in [t_0, \theta(\omega)[)$ stays strictly above the barrier. It follows that  for a.e. $\omega$, the function   $s \rightarrow A_s^c(\omega)$ is constant on $[t_0, \theta(\omega)]$. In other words, $Y_s^{t_0,x_0}=\mathcal{E}_{s, \theta}^{t_0,x_0}(Y_{\theta})$, $t_0 \leq s \leq \theta$ $a.s$,   that  is $(Y_s^{t_0,x_0}, s \in [t_0, \theta])$ is the solution of the classical BSDE associated with  driver $f$, terminal time $\theta$ and  terminal value $Y_{\theta}^{t_0,x_0}$.
Applying It\^o's lemma to $\phi(t, X_t^{t_0,x_0})$, we get:
\begin{align}\label{4.11}
\phi(t, X_t^{t_0,x_0})&=\phi(\theta, X_{\theta}^{t_0,x_0}) -\int_{t}^{\theta}\psi(s,X_s^{t_0,x_0})ds 
- \int_{t}^{\theta}(\sigma \dfrac{\p \phi}{\p x})(s,X_s^{t_0,x_0})dW_s \nonumber\\
& \qquad -\int_{t}^{\theta} \int_{\mathbb{R^*}}B\phi(s,X_{s^-}^{t_0,x_0})\Tilde{N}(ds,de)
\end{align}
where
$\psi(s,x):=\dfrac{\p \phi}{\p s}(s,x)+L\phi(s,x).$\\
Note that $(\phi(s,X_s^{t_0,x_0}), (\sigma \dfrac{\p \phi }{\p x})(s, X_s^{t_0,x_0}), B\phi(s,X_{s^-}^{t_0,x_0}); s \in[t_0,\theta])$
 is the  solution of the BSDE associated to terminal time $\theta$, terminal value $\phi(\theta, X_{\theta}^{t_0,x_0})$ and driver process $-\psi(s,X_s^{t_0,x_0})$.

By \eqref{4.9} and the definition of the stopping time $\theta$, we have a.s. that for each  $ s \in [t_0,\theta]$:
\begin{align}\label{4.12}
&-\dfrac{\p \phi}{\p t}(s,X_s^{t_0,x_0})-L\phi(s,X_s^{t_0,x_0})  \nonumber \\
& \qquad -f\left(s,X_s^{t_0,x_0},\phi(s,X_s^{t_0,x_0}), (\sigma \dfrac{\p \phi}{\p x} )(s,X_s^{t_0,x_0}),B \phi(s,X_s^{t_0,x_0})\right) \geq\vp.
\end{align}
Using the definition of the function $\psi$, \eqref{4.12} can be  rewritten:  for all $s \in [t_0,\theta]$, 
 \begin{align*}
&-\psi(s,X_s^{t_0,x_0}) \\ & \quad -f \left(s,X_s^{t_0,x_0},\phi(s,X_s^{t_0,x_0}),(\sigma \dfrac{\p \phi}{\p x})(s,X_s^{t_0,x_0}),B \phi(s,X_s^{t_0,x_0})\right) \geq \vp.
\end{align*}
This  gives a relation between the drivers $-\psi(s,X_s^{t_0,x_0})$ and 
$f (s,X_s^{t_0,x_0}, \cdot)$
of the two BSDEs. Also, $\phi(\theta, X_{\theta}^{t_0,x_0}) \geq u(\theta, X_{\theta}^{t_0,x_0})=Y_{\theta}^{t_0,x_0}$ a.s. \\Consequently, the extended comparison result for  BSDEs with jumps given in the Appendix (see Proposition \ref{A.4}) implies that:
\begin{equation*}
\phi(t_0,x_0)=\phi(t_0,X_{t_0}^{t_0,x_0})> Y_{t_0}^{t_0,x_0}=u(t_0,x_0),
\end{equation*}
which leads to a contradiction.

\bigskip

$\bullet$   We now prove that $u$  is a viscosity supersolution of \eqref{4.8}.

Let $(t_0,x_0) \in (0,T) \times \mathbb{R}$ and $\phi \in  { C}^{1,2}([0,T] \times \mathbb{R})$ be such that\\ $\phi(t_0, x_0)=u(t_0,x_0)$  and $\phi(t,x) \leq u(t,x)$, $\forall (t,x) \in [0,T] \times \mathbb{R}$.
Since the solution $(Y_s^{t_0,x_0})$ stays above the obstacle, we have:
\begin{equation*}
u(t_0,x_0) \geq h(t_0,x_0).
\end{equation*}

We must prove that:
\begin{equation*}
-\dfrac{\p \phi}{\p t}(t_0,x_0)-L\phi(t_0,x_0)-f \left(t_0,x_0, \phi (t_0,x_0), (\sigma \dfrac{\p \phi}{\p x})(t_0,x_0),B \phi(t_0,x_0)\right) \geq 0.
\end{equation*}

Suppose by contradiction that:
\begin{equation*}
-\dfrac{\p \phi}{\p t}(t_0,x_0)-L\phi(t_0,x_0)-f\left(t_0,x_0, \phi (t_0,x_0), (\sigma \dfrac{\p \phi}{\p x})(t_0,x_0),B\phi(t_0,x_0)\right) < 0.
\end{equation*}

By continuity, we can suppose that there exists $\vp>0$ and $\eta_\vp>0$ such that for each
$(t,x)$ such that $t_0 \leq t \leq t_0+ \eta_\vp<T$ and $|x-x_0| \leq \eta_\vp$, we have:
\begin{equation}\label{4.14}
 -\dfrac{\p \phi}{\p t} (t,x)-L \phi(t,x)-f\left(t,x,\phi(t,x), (\sigma \dfrac{\p \phi }{\p x})(t,x),B \phi(t,x)\right) \leq -\vp.
\end{equation}
We define  the stopping time $\theta$ as:
\begin{equation*}
\theta:= (t_0 + \eta_\vp) \wedge \inf\{s \geq t_0/ |X_s^{t_0,x_0}-x_0|> \eta_\vp \}.
\end{equation*}
Applying as above It\^o's lemma to $\phi(s, X_s^{t_0,x_0})$,  we get that \\
$(\phi(s,X_s^{t_0,x_0}), (\sigma \dfrac{\p \phi}{\p x})(s, X_s^{t_0,x_0}), B\phi(s,X_{s^-}^{t_0,x_0}); s \in [t_0, \theta])$ is the solution of the BSDE associated with terminal value $\phi(\theta, X_{\theta}^{t_0,x_0})$ and driver $-\psi(s,X_s^{t_0,x_0})$.

The process $(Y^{t_0,x_0}, s \in [t_0, \theta])$ is the solution of the classical BSDE  associated with terminal condition $Y_{\theta}^{t_0,x_0}=u(\theta,X_{\theta}^{t_0,x_0})$ and  generalized driver
\begin{equation*}
f(s,X_s^{t_0,x_0},y,z,q)ds+dA_s^{t_0,x_0}.
\end{equation*}
By \eqref{4.14} and the definition of the stopping time $\theta$, we have :
\begin{align*}
&(-\dfrac{\p \phi}{\p t} (s,X_s^{t_0,x_0})-L \phi(s,X_s^{t_0,x_0})-f(s,X_s^{t_0,x_0},\phi (s,X_s^{t_0,x_0}),\\&\qquad  (\sigma \dfrac{\p  \phi}{\p x})(s,X_s^{t_0,x_0}),B \phi(s,X_s^{t_0,x_0})))ds-dA_s^{t_0,x_0} \leq -\vp \text{ }ds,\quad  t_0 \leq s \leq \theta \text{ a.s.}
\end{align*}
or, equivalently,
\begin{align*}
& -\psi(s,X_s^{t_0,x_0})ds \\
& \quad   \leq (f(s,X_s^{t_0,x_0},\phi (s,X_s^{t_0,x_0}),  (\sigma \dfrac{\p \phi}{\p x} )(s,X_s^{t_0,x_0}),B \phi (s,X_s^{t_0,x_0})))ds   \\
& \qquad +dA_s^{t_0,x_0}-\vp \text{ } ds, \quad t_0 \leq s \leq \theta \text{ a.s.}
\end{align*}
This gives a relation between the drivers of the two BSDEs.\\ Also, $\phi(\theta, X_{\theta}^{t_0,x_0}) \leq u(\theta,X_{\theta}^{t_0,x_0})=Y_{\theta}^{t_0,x_0}$ a.s.
Consequently, Proposition \ref{A.4}  in the Appendix implies that:
\begin{equation*}
\phi(t_0,x_0)=\phi(t_0,X_{t_0}^{t_0,x_0})< Y_{t_0}^{t_0,x_0}=u(t_0,x_0),
\end{equation*}
which leads to a contradiction. \qed
\end{proof}

%
%
%
\section{Uniqueness  Result for the Obstacle Problem}\label{sec4}

We provide a uniqueness result for \eqref{4.8} in the particular case when for each $\phi \in C^{1,2}([0,T] \times  \mathbb{R})$,   $B \phi$ is a map  valued in $\mathbb{R}$ instead of ${L}^2_{\nu}$. More precisely,
\begin{equation}\label{defB} 
 B \phi(t,x) :=\int_{\mathbb{R}^*}(\phi(t,x+\beta(x,e))-\phi(t,x))\gamma(x,e)\nu(de),
 \end{equation}
 which is well defined  since
$ |\phi(t,x+\beta(x,e))-\phi(t,x)| \leq C |\beta(x,e)|.$ \\ 
 We suppose that Assumption  $\textbf{H}_1$ holds and  we make the additional  assumptions:

{\bf Assumption $\textbf{H}_2$:}
$$ {1.}   \; f(s,X_s^{t,x}(\omega),y,z,k) :=\overline{f}\left(s,X_s^{t,x}(\omega),y,z,\int_{\mathbb{R}^*}k(e)\gamma(X_s^{t,x}(\omega),e)\nu(de)\right)\textbf{1}_{s \geq t},$$
where $\overline{f}: [0,T] \times  \mathbb{R}^4  \rightarrow \mathbb{R}$
is continuous in $t$ uniformly with respect to $x,y,z,k$,  continuous in $x$ uniformly  with respect to $y,z,k$, 
and satisfies:
\begin{itemize}

\item[(i)]
 $|\overline{f}(t,x,0,0,0)| \leq C,$ for all $t \in [0,T], x\in\mathbb{R}.$

\item[(ii)]
$ |\overline{f}(t,x,y,z,k)- \overline{f}(t,x',y',z',k')| \leq C(|y-y'|+|z-z'|+|k-k'|)$, for all $ t \in  [0,T]$, $y, y', z,z',k,k' \in \mathbb{R}$.

\item[(iii)]
$ k \mapsto \overline{f}(t,x,y,z,k)$ is non-decreasing, for all $ t \in  [0,T]$, $x,y,z \in \mathbb{R}$.
\end{itemize}
$2$.  For each $R>0$, there exists a continuous function $m_R: \mathbb{R}_{+} \rightarrow \mathbb{R}_+$   such that
$m_R(0)=0$ and $|\overline{f}(t,x,v,p,q) - \overline{f}(t,y,v,p,q)| \leq m_{R}(|x-y|(1+|p|)),$ \\  for all $ t \in  [0,T]$, $|x|, | y|\leq R, |v|\leq R,  \;p,q \in \mathbb{R}.$ \\
$3$.
$|\g(x,e) - \g(y,e)| \leq C|x-y|(1\wedge e^{2})$ and $0 \leq \gamma(x,e) \leq  C(1 \wedge |e|)$, for all $x,y \in \mathbb{R}, e \in \mathbb{R}^*.$ \\
$4$.
\text{  There exists }$r>0$ such that for all $t\in [0,T]$, $x,u,v,p,l\in \mathbb{R}$:
$$
\overline{f}(t,x,v,p,l) - \overline{f}(t,x,u,p,l)\geq  r(u-v)
\text{ when } u\geq v.$$
$5$.
$ |h(t,x)|+|g(x)| \leq C $, for all $ t \in  [0,T]$,    $x \in \mathbb{R}$. 

To simplify notation, $\overline{f}$ is denoted by $f$ in the sequel.

\

We state below a comparison theorem, which uses results of three lemmas. The proofs of these 
lemmas are given in Subsection \ref{PL}. 

\begin{theorem}[Comparison principle]\label{8.9} Under the above hypotheses, if $U$ is a viscosity subsolution  and $V$ is a viscosity supersolution of the obstacle problem \eqref{4.8}   in the class of continuous bounded  functions, then $U(t,x) \leq V(t,x)$,
for each $(t,x) \in [0,T] \times \mathbb{R}$.
\end{theorem}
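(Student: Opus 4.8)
The plan is to argue by contradiction using the doubling-of-variables method, adapted to the nonlocal setting via the parabolic Jensen--Ishii lemma of \cite{1}. Suppose $M:=\sup_{[0,T]\times\mathbb{R}}(U-V)>0$. Since $U$ and $V$ are bounded and continuous, for small parameters $\varepsilon,\alpha>0$ I would study
\[
\Phi(t,s,x,y):=U(t,x)-V(s,y)-\frac{|x-y|^2}{2\varepsilon}-\frac{(t-s)^2}{2\varepsilon}-\alpha(|x|^2+|y|^2),
\]
whose supremum is attained at some point $(\bar t,\bar s,\bar x,\bar y)$, the $\alpha$-term providing the coercivity needed on the unbounded domain. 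Standard estimates then give $\alpha(|\bar x|^2+|\bar y|^2)\to0$, together with $|\bar x-\bar y|^2/\varepsilon\to0$ and $|\bar t-\bar s|^2/\varepsilon\to0$, and $\Phi(\bar t,\bar s,\bar x,\bar y)\to M$ as $\varepsilon\to0$ (after $\alpha\to0$). First I would rule out that the maximum sits on the terminal slice: if $\bar t=T$ or $\bar s=T$ then $U(T,\bar x)-V(T,\bar y)\le g(\bar x)-g(\bar y)\to0$ by continuity of $g$ and $\bar x-\bar y\to0$, contradicting $M>0$; hence $\bar t,\bar s<T$ for the parameters small. The obstacle constraint is then disposed of cheaply: the supersolution property yields $V(\bar s,\bar y)\ge h(\bar s,\bar y)$, so $U(\bar t,\bar x)-h(\bar t,\bar x)\ge \big(U(\bar t,\bar x)-V(\bar s,\bar y)\big)-|h(\bar t,\bar x)-h(\bar s,\bar y)|$, which is $>0$ for small parameters; thus $U(\bar t,\bar x)>h(\bar t,\bar x)$ and the subsolution inequality for the full integro-differential part of \eqref{4.8} is active at $(\bar t,\bar x)$.

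At this point I would invoke the nonlocal version of the Jensen--Ishii lemma from \cite{1}: for every splitting level $\kappa>0$ it produces coinciding time-derivative terms (since the penalisation in time enters symmetrically, the contributions $(\bar t-\bar s)/\varepsilon$ cancel under subtraction) and second-order coefficients $X\le Y$ satisfying the matrix inequality associated with $D^2\!\left(\tfrac{|x-y|^2}{2\varepsilon}\right)$ plus the $\alpha$-terms, in such a way that the viscosity inequalities hold with the operator $K$ split into a singular near-part over $\{|e|\le\kappa\}$, estimated through $X,Y$, and a far-part over $\{|e|>\kappa\}$, estimated through the values of $U,V$ at the shifted points. The same splitting applies to the nonlocal term $B$ appearing inside the driver.

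Subtracting the two inequalities, I would control the pieces as follows. The local second-order terms combine via $X\le Y$ and the Lipschitz continuity of $\sigma$ into a bound of order $|\bar x-\bar y|^2/\varepsilon+\alpha(|\bar x|^2+|\bar y|^2)$, and the first-order $b$-terms likewise by Lipschitz continuity. For the $K$-operator, the near-part is $O\!\big(\int_{|e|\le\kappa}(1\wedge e^2)\,\nu(de)\big)\to0$ as $\kappa\to0$ (using $\beta(x,e)^2\le C(1\wedge e^2)$), while the far-part is controlled by the maximum-point inequality
\[
U(\bar t,\bar x+\beta(\bar x,e))-V(\bar s,\bar y+\beta(\bar y,e))-\big(U(\bar t,\bar x)-V(\bar s,\bar y)\big)\le \frac{|\bar x+\beta(\bar x,e)-\bar y-\beta(\bar y,e)|^2-|\bar x-\bar y|^2}{2\varepsilon}+o(1),
\]
whose quadratic correction is integrable and vanishing thanks to $|\beta(\bar x,e)-\beta(\bar y,e)|\le C|\bar x-\bar y|(1\wedge|e|)$ (the $o(1)$ collecting the $\alpha$-correction). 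For the driver, continuity of $f$ in $t$ absorbs the $\bar t$-versus-$\bar s$ discrepancy; the modulus $m_R(|\bar x-\bar y|(1+|p|))$ with $p=(\bar x-\bar y)/\varepsilon$ absorbs the $x$-dependence since $|\bar x-\bar y|^2/\varepsilon\to0$; the Lipschitz bound in $z$ together with $|\sigma(\bar x)-\sigma(\bar y)|\,|p|\le C|\bar x-\bar y|^2/\varepsilon$ absorbs the gradient slot; and the nonlocal slot $BU-BV$ is handled by the same near/far splitting, here crucially using that $f$ is nondecreasing in its last argument and that $\gamma\ge0$, so that the far-part enters with the sign for which the above maximum-point inequality applies. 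What remains is the $y$-slot of $f$: by the strict monotonicity of Assumption~$\textbf{H}_2$.4 with rate $r$, it contributes at most $-r\,(U(\bar t,\bar x)-V(\bar s,\bar y))$. Collecting everything yields $r\,(U(\bar t,\bar x)-V(\bar s,\bar y))\le o(1)$ as $\kappa\to0$ then $\varepsilon,\alpha\to0$, i.e. $rM\le0$, contradicting $r,M>0$.

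The main obstacle is the simultaneous treatment of the two nonlocal objects — the integro-differential operator $K$ in the equation and the nonlocal term $B$ inside the driver $f$ — which, as noted after \eqref{defB}, is not covered directly by \cite{1}. The delicate points are: controlling the singular near-parts uniformly through the matrix inequality while letting $\kappa\to0$ before $\varepsilon,\alpha\to0$; ensuring via the monotonicity of $f$ in $k$ and $\gamma\ge0$ that $BU-BV$ enters with the correct sign so that its far-part is dominated by the maximum-point inequality; and verifying that the cross term $|\bar x-\bar y|^2/\varepsilon$, which simultaneously drives the $x$- and $z$-dependence of $f$ through $m_R$ and the Lipschitz bound on $\sigma$, genuinely tends to $0$. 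These estimates are precisely where the three auxiliary lemmas are needed.
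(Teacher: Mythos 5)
Your proposal is correct and follows essentially the same route as the paper: the same doubling of variables with quadratic penalization in $(t,s,x,y)$ plus a coercivity term, the nonlocal Jensen--Ishii lemma of \cite{1} with the near/far splitting of both nonlocal operators $K$ and $B$, and the strict monotonicity of Assumption $\textbf{H}_2$.4 to extract $rM\le 0$. The only (immaterial) differences are organizational --- you assume $M>0$ globally and dispose of the terminal-slice and obstacle cases as contradictions, whereas the paper treats them as separate cases yielding $M\le0$ directly --- and you should keep the $1/\vp^2$ factor visible in the near-part estimate, which is harmless since you correctly send $\kappa\to0$ before $\vp,\alpha\to0$.
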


\begin{proof}
Set
$$M:=\sup_{  [0,T] \times \mathbb{R} }(U-V).$$ 
It is sufficient to prove that $M\leq 0$. 
For each  $\vp, \eta >0 $, 
we introduce the function:
\begin{equation*}
\psi^{\vp, \eta}(t,s,x,y):=U(t,x)-V(s,y)- \dfrac{(x-y)^2}{\vp^2}-\dfrac{(t-s)^2}{\vp^2}- \eta^2(x^2+y^2),
\end{equation*} 
for $t,s,x,y$ in $[0,T]^2 \times \mathbb{R}^2$. 
Let $$M^{\vp, \eta}:=\max_{[0,T]^2 \times \mathbb{R}^2
 }\psi^{\vp, \eta}.$$
This supremum
is reached at some point $(t^{\vp, \eta},s^{\vp, \eta},x^{\vp, \eta},y^{\vp, \eta}).$

%
%
%
Using that $\psi^{\vp, \eta}(t^{\vp, \eta},s^{\vp, \eta},x^{\vp, \eta},y^{\vp, \eta}) \geq \psi^{\vp, \eta}(0,0,0,0)$, we obtain:
\begin{align}\label{4.21}
& U(t^{\vp, \eta},x^{\vp, \eta})-V(s^{\vp, \eta},y^{\vp, \eta})-\dfrac{(t^{\vp, \eta}-s^{\vp, \eta})^2}{\vp^2}-\dfrac{(x^{\vp, \eta}-y^{\vp, \eta})^2}{\vp^2} -\eta^2((x^{\vp, \eta})^2+(y^{\vp, \eta})^2) \geq U(0,0)-V(0,0),
\end{align}
or, equivalently,
\begin{align}\label{4.22}
&\dfrac{(t^{\vp, \eta}-s^{\vp, \eta})^2}{\vp^2}+\dfrac{(x^{\vp, \eta}-y^{\vp, \eta})^2}{\vp^2}+\eta^2((x^{\vp, \eta})^2+(y^{\vp, \eta})^2) \leq \|U\|_{\infty}+\|V\|_{\infty}-U(0,0)-V(0,0).
\end{align}
Consequently, we can find a constant $C$ such that:
\begin{align}\label{4.23}
|x^{\vp, \eta}-y^{\vp, \eta}|+|t^{\vp, \eta}-s^{\vp, \eta}| \leq C\vp\\
|x^{\vp, \eta}|\leq \dfrac{C}{\eta}, |y^{\vp, \eta}| \leq \dfrac{C}{\eta}.\label{4.23b}
\end{align}
Extracting a subsequence if necessary, we may suppose that for each $\eta$ the sequences $(t^{\vp, \eta})_{\vp}$ and  $(s^{\vp, \eta})_{\vp}$ converge to a common limit $t^{\eta}$ when 
$\vp$ tends to $0$, and from \eqref{4.23} and \eqref{4.23b} we may also suppose, extracting again, that for each $\eta$, the sequences $(x^{\vp, \eta})_{\vp}$ and $(y^{\vp, \eta})_{\vp}$ converge to a common limit $x^{\eta}.$
\begin{lemma}\label{converg}
We have:
\begin{align*} 
\lim_{\vp \rightarrow 0} \frac{(x^{\vp, \eta} - y^{\vp, \eta})^{2}}{\vp^{2}}=0\,; \quad \lim_{\vp \rightarrow 0}\frac{(t^{\vp, \eta}-s^{\vp, \eta})^{2}}{\vp^{2}}=0 \\
\lim_{\eta \rightarrow 0} \lim_{\vp \rightarrow 0} M^{\vp, \eta}=M.
 \end{align*}
\end{lemma}
We now introduce the functions:
\begin{equation*}
\Psi_1(t,x):=V(s^{\vp,\eta}, y^{\vp, \eta})+ \dfrac{(x-y^{\vp, \eta})^2}{\vp^2}+\dfrac{(t-s^{\vp, \eta})^2}{\vp^2}+ \eta^2(x^2+(y^{\vp, \eta})^2)
\end{equation*}
\begin{equation*}
\Psi_2(s,y):=U(t^{\vp,\eta}, x^{\vp, \eta})- \dfrac{(x^{\vp, \eta}-y)^2}{\vp^2}-\dfrac{(t^{\vp, \eta}-s)^2}{\vp^2}- \eta^2((x^{\vp, \eta})^2+y^2).
\end{equation*}

As $(t,x) \rightarrow  (U-\Psi_1)(t,x)$ reaches its maximum at $(t^{\vp, \eta},x^{\vp, \eta})$ and $U$ is a subsolution we have  two  cases:

$\bullet$ $t^{\vp,\eta}=T$ and then $U(t^{\vp, \eta}, x^{\vp, \eta}) \leq g(x^{\vp, \eta})$, \\

$\bullet$ $t^{\vp,\eta} \not =T$ and then
\begin{align}\label{4.16}
&\min \left(U(t^{\vp, \eta}, x^{\vp, \eta})-h(t^{\vp, \eta}, x^{\vp, \eta}), \;\;
 \dfrac{\p \Psi_1}{\p t}(t^{\vp, \eta},x^{\vp, \eta})-L \Psi_1(t^{\vp, \eta},x^{\vp, \eta}) -  \right. \nonumber \\
 & \left.  -  f\left(t^{\vp, \eta},x^{\vp, \eta},U(t^{\vp, \eta},x^{\vp, \eta}), (\sigma \dfrac{\p \Psi_{1}}{\p x} )(t^{\vp, \eta},x^{\vp, \eta}), B \Psi_1(t^{\vp, \eta},x^{\vp, \eta})\right)\right) \leq 0.
\end{align}
As $(s,y) \rightarrow  (\Psi_2-V)(s,y)$ reaches its maximum at $(s^{\vp, \eta},y^{\vp, \eta})$ and $V$ is a supersolution we have the two following cases:
\begin{itemize}
\item[$\bullet$]
 $s^{\vp,\eta}=T$ and then $V(s^{\vp, \eta}, y^{\vp, \eta}) \geq g(y^{\vp, \eta})$,
\item[$\bullet$]
 $s^{\vp,\eta} \not =T$ and then
\begin{align}\label{4.17}
&\min (V(s^{\vp, \eta}, y^{\vp, \eta})-h(s^{\vp, \eta}, y^{\vp, \eta}),\nonumber\\& \dfrac{\p \Psi_2}{\p t}(s^{\vp, \eta},y^{\vp, \eta})-L \Psi_2(s^{\vp, \eta},y^{\vp, \eta})-f(s^{\vp, \eta},y^{\vp, \eta},V(s^{\vp, \eta},y^{\vp, \eta}),\\&\qquad (\sigma \dfrac{\p \Psi_{2} }{\p x} )(s^{\vp, \eta},y^{\vp, \eta}), B \Psi_2(s^{\vp, \eta},y^{\vp, \eta})) \geq 0.\nonumber
\end{align}
\end{itemize}
We now prove that 
$
M \leq 0.
$
Three cases are possible. \\
\textbf{1st case:} There exists a subsequence of $(t^{\eta})$ such that $t^{\eta}=T$ for all $\eta$ (of this subsequence).
As $U$ is continuous, for all $\eta$ and for  $\vp$ small enough
\begin{equation*}
U(t^{\vp, \eta},x^{\vp, \eta}) \leq U(t^{\eta},x^{ \eta})+ \eta \leq  g(x^{\eta})+ \eta,
\end{equation*}
and as $V$ is continuous, for all $\eta$ and for $\vp$ small enough
\begin{equation*}
V(s^{\vp, \eta},y^{\vp, \eta}) \geq V(t^{\eta},x^{ \eta})- \eta \geq g(x^{\eta})- \eta.
\end{equation*}
Hence
\begin{equation*}
U(t^{\vp, \eta},x^{\vp, \eta})- V(s^{\vp, \eta},y^{\vp, \eta}) \leq 2\eta
\end{equation*}
and
\begin{align*}
M^{\vp, \eta}&=U(t^{\vp, \eta},x^{\vp, \eta})- V(s^{\vp, \eta},y^{\vp, \eta})-\dfrac{(x^{\vp, \eta}-y^{\vp, \eta})^2}{\vp^2}-\dfrac{(t^{\vp, \eta}-s^{\vp, \eta})^2}{\vp^2}\\&-\eta^2((x^{\vp, \eta})^2+(y^{\vp, \eta})^2) \leq U(t^{\vp, \eta},x^{\vp, \eta})- V(s^{\vp, \eta},y^{\vp, \eta}) \leq 2\eta.
\end{align*}
Letting $\vp \rightarrow 0$ and then $\eta \rightarrow 0$ one gets, using Lemma \ref{converg}, that $M \leq 0$.

\textbf{2nd case:} There exists a subsequence such that $t^{\eta} \not= T$, and for all $\eta$ belonging to this subsequence, there exists a subsequence of $(x^{\vp,\eta})_{\eta}$ such that
\begin{equation*}
U(t^{\vp, \eta},x^{\vp, \eta})-h(t^{\vp, \eta},x^{\vp, \eta}) \leq 0.
\end{equation*}
As from \eqref{4.17} one has
\begin{equation*}
V(s^{\vp, \eta},y^{\vp, \eta})-h(s^{\vp, \eta},y^{\vp, \eta}) \geq 0,
\end{equation*}
it comes that
\begin{equation*}
M^{\vp, \eta} \leq  U(t^{\vp, \eta},x^{\vp, \eta})- V(s^{\vp, \eta},y^{\vp, \eta}) \leq h(t^{\vp, \eta},x^{\vp, \eta})-h(s^{\vp, \eta},y^{\vp, \eta}).
\end{equation*}
Letting $\vp \rightarrow 0$ and then $\eta \rightarrow 0$, using the equality $\lim_{\eta \rightarrow 0} \lim_{\vp \rightarrow 0} M^{\vp, \eta}=M$ (see Lemma \ref{converg}), we derive that $M \leq 0$. \\

\textbf{Last case:} We are left with the case when, for a subsequence of $\eta$, we have $t^{\eta} \not= T$ and for all $\eta$ belonging to this subsequence there exists a subsequence of $(x^{\vp, \eta})_{\vp}$ such that:
\begin{equation*}
U(t^{\vp,\eta},x^{\vp, \eta})-h(t^{\vp, \eta}, x^{\vp, \eta})>0.
\end{equation*}
Set
\begin{equation}\label{4.24}
\v(t,s,x,y) := \frac{(x-y)^{2}}{\vp^{2}} + \frac{(t-s)^{2}}{\vp^{2}} + \eta^{2}(x^{2} + y^{2}).
\end{equation}
The maximum of the function $\psi^{\vp,\eta}(t,s,x,y): = U(t,x) - V(s,y)- \v(t,s,x,y)$ is reached at the point $(t^{\vp,\eta}, s^{\vp,\eta}, x^{\vp,\eta}, y^{\vp,\eta})$. We apply the non-local version of Jensen Ishii's lemma \cite{1} and we obtain that there exist:
\begin{equation*}
(a,\overline{p},X)\in \cal{P}^{2,+}U(t^{\vp,\eta}, x^{\vp,\eta}),\; (b,\overline{q},Y)\in \cal{P}^{2,-}V(s^{\vp,\eta}, y^{\vp,\eta})
\end{equation*}
such that
\begin{equation*}
\begin{cases}
\overline{p} = p+2\eta^{2}x^{\vp,\eta}; \; \;    \overline{q} = p-2\eta^{2}y^{\vp,\eta}; \;\; 
p = \frac{2(x^{\vp,\eta} - y^{\vp,\eta})}{\vp^{2}}\\a =b= \frac{2(t^{\vp,\eta} - s^{\vp,\eta})}{\vp^{2}}\\
\begin{pmatrix}
X&0\\0&-Y
\end{pmatrix} \leq \frac{2}{\vp^{2}}\begin{pmatrix}
1&-1\\-1&1
\end{pmatrix} +2\eta^{2}\begin{pmatrix}
1&0\\0&1
\end{pmatrix}.
\end{cases}
\end{equation*}
Here, $\mathcal{P}^{2,+}$ (resp. $\mathcal{P}^{2,-}$) is the set of superjets (resp. subjets) defined in  \cite{1} (see Definition 3). 
Since $(t^{\vp,\eta}, s^{\vp,\eta}, x^{\vp,\eta}, y^{\vp,\eta})$ is a global maximum of $\psi^{\vp,\eta}$ ,we have:
\begin{align*}
&\psi^{\vp,\eta}(t^{\vp,\eta}, s^{\vp,\eta}, x^{\vp,\eta}+\b (x^{\vp,\eta}, e), y^{\vp,\eta}+\b(y^{\vp,\eta},e))\leq \psi^{\vp,\eta}(t^{\vp,\eta}, s^{\vp,\eta}, x^{\vp,\eta}, y^{\vp,\eta})
\nonumber\\
&\qquad \Leftrightarrow U(t^{\vp,\eta}, x^{\vp,\eta}+\b(x^{\vp,\eta},e)) - V(s^{\vp,\eta}, y^{\vp,\eta}+\b(y^{\vp,\eta},e))\\
&\qquad-\frac{(x^{\vp,\eta}+\b(x^{\vp,\eta},e) - y^{\vp,\eta} - \b(y^{\vp,\eta}, e))^{2}}{\vp^{2}}  \nonumber\\
&\qquad- \frac{(t^{\vp,\eta} - s^{\vp,\eta})^{2}}{\vp^{2}} - \eta^{2}((x^{\vp,\eta} +\b (x^{\vp,\eta}, e))^{2}+(y^{\vp,\eta}+\b(y^{\vp,\eta},e))^{2})\\&\qquad\leq U(t^{\vp,\eta}, x^{\vp,\eta}) - V(s^{\vp,\eta}, y^{\vp,\eta}) \nonumber\\
&\qquad- \frac{(x^{\vp,\eta} - y^{\vp,\eta})^{2}}{\vp^{2}}  - \frac{(t^{\vp,\eta}-s^{\vp,\eta})^{2}}{\vp^{2}} - \eta^{2}((x^{\vp,\eta})^{2} + (y^{\vp,\eta})^{2}).
\end{align*}
Consequently, we get:
\begin{align}\label{4.27}
&U(t^{\vp,\eta}, x^{\vp,\eta} +\b (x^{\vp,\eta}, e)) - U(t^{\vp,\eta}, x^{\vp,\eta}) \leq V(s^{\vp,\eta}, y^{\vp,\eta}+\b(y^{\vp,\eta},e)) \nonumber\\
&- V(s^{\vp,\eta}, y^{\vp,\eta})+
 \frac{(\b(x^{\vp,\eta},e) - \b(y^{\vp,\eta},e))^{2}}{\vp^{2}}
+p(\b(x^{\vp,\eta},e) - \b(y^{\vp,\eta},e)) \nonumber \\ 
&+\eta^{2}(\b^{2}(x^{\vp,\eta},e)+ 2x^{\vp,\eta}\b(x^{\vp,\eta}, e) + 2y^{\vp,\eta}\b(y^{\vp,\eta},e) + \b^{2}(y^{\vp,\eta},e)).
\end{align}
Let us fix $\delta >0$ and consider the ball  $\mathcal{B}_{\delta}= \mathcal{B}(0,\delta)$. 
We introduce the  operators  $K^\d$, $ \Tilde{K}^{\d}, {B}^{\d},  \Tilde{B}^{\d}$ 
 corresponding to the operators $K$  and $B$  defined in \eqref{defK3}  and  \eqref{defB}, but integrating  on $\mathcal{B}_\delta$ or $\mathbb{R}\backslash \mathcal{B}_\delta$ (also denoted by $\mathcal{B}^{c}_\delta$) only.\\
They are defined respectively  for all $\phi \in  C^{1,2}$,   $\Phi \in { \cal C}$ by 
\begin{align}
&K^{\d}[t, x, \phi]  :=\int_{\mathcal{B}_\d}\left(\phi( t, x +\b(x, e)) 
\dd-\phi( t, x) - \dfrac{\p \phi }{\p x}( t, x)\b(x, e)\right)\nu(de)   \label{estim1} \\
& \Tilde{K}^{\d}[t, x, \pi,\Phi] := \int_{\mathcal{B}^{c}_\d} \bigg(\Phi(t, x+\b(x, e)) - \Phi(t, x) - \pi \b(x,e)\bigg) \nu(de).\label{estim2} 
\end{align}
\begin{align}
 {B}^{\d}[t,x,\phi]&:=\int_{\mathcal{B}_\d}\bigg(\phi( t, x+\b(x,e))   -  \phi(t,x)\bigg)\g(x,e)\nu(de) \label{estim5} \\
 \Tilde{B}^{\d}[t, x, \Phi] &:= \int_{\mathcal{B}^c_\d}\bigg(\Phi(t, x+\b(x,e)) - \Phi(t, x)\bigg)\g(x,e)\nu(de) \label{estim6} 
 \end{align}
 Here ${ \cal C}$ denotes the set of bounded continuous functions. 

  
  Note that the operators ${K}^{\d}$,   $\Tilde{K}^{\d}$, ${B}^{\d}$ and $\Tilde{B}^{\d}$ satisfy the hypotheses (NLT) of \cite{1} (see Section 2.2 in   \cite{1}). Hence we can use the alternative definition for sub-superviscosity solutions in terms of sub-superjets (see Definition 4 in \cite{1}). 
Since $U$ is a subviscosity solution and $V$ is superviscosity solution,  we have:
\begin{equation}\label{4.25}
\begin{cases}
F(t^{\vp,\eta},x^{\vp,\eta}, U(t^{\vp,\eta},x^{\vp,\eta}), a, \overline{p}, X, K^{\d}[t^{\vp,\eta}, x^{\vp,\eta}, \v_{x}]  \\\qquad +
\Tilde{K}^{\d}[t^{\vp,\eta}, x^{\vp,\eta}, \overline{p}, U], {B}^{\d}[t^{\vp,\eta}, x^{\vp,\eta}, \v_{x}]  + \Tilde{B}^{\d}[t^{\vp,\eta},x^{\vp,\eta},U])\leq 0\\F(s^{\vp,\eta}, y^{\vp,\eta}, V(s^{\vp,\eta}, y^{\vp,\eta}), a, \overline{q}, Y,  {K}^{\d}[s^{\vp,\eta}, y^{\vp,\eta}, - \v_{y}] \\\qquad+ \Tilde{K}^{\d} [s^{\vp,\eta}, y^{\vp,\eta}, \overline{q}, V], {B}^{\d}[s^{\vp,\eta}, y^{\vp,\eta}, -\v_{y}]+ \Tilde{B}^{\d}[s^{\vp,\eta}, y^{\vp,\eta}, V])\geq 0
\end{cases}
\end{equation}
where
\begin{equation}\label{4.26}
F(t,x,u,a,p,X, l_{1},l_{2}): = -a-\frac{1}{2}\sigma^2 (x)X - b(x)p-l_{1} -f(t,x,u,p\s(x),l_{2}).
\end{equation}
 We denote by $\v_{x}$ the function $(t,x) \mapsto \v(t, x, s^{\vp,\eta}, y^{\vp,\eta})$ and by $\v_{y}$ the function $(s,y) \mapsto \v(t^{\vp,\eta},x^{\vp,\eta},s, y)$. The two following lemmas hold.
 
\begin{lemma}\label{Lestim}
Let
\begin{align}\label{4.28}
l_{K} &:= {K}^{\d}[t^{\vp,\eta}, x^{\vp,\eta},\v_{x}] + \Tilde{K}^{\d}[t^{\vp,\eta},x^{\vp,\eta},\overline{p},U] \nonumber\\
l_{K}^{'} &:= {K}^{\d}[s^{\vp,\eta}, y^{\vp,\eta}, -\v_{y}] + \Tilde{K}^{\d}[s^{\vp,\eta}, y^{\vp,\eta}, \overline{q}, V].
\end{align}
We have
\begin{equation}\label{estim}
l_{K} \leq l_{K}^{\prime}+O(\frac{(x^{\vp,\eta}-y^{\vp,\eta})^{2}}{\vp^{2}}) + O(\eta^{2}) + (\frac{1}{\vp^{2}}+\eta^{2})O(\d).
\end{equation}
\end{lemma}
\begin{lemma}\label{Lestim1}
Let
\begin{align}\label{4.32}
l_{B}:= {B}^{\d}[t^{\vp,\eta}, x^{\vp,\eta}, \v_{x}]+\Tilde{B}^{\d}[t^{\vp,\eta}, x^{\vp,\eta},U] \nonumber \\
l_{B}^{\prime}:= {B}^{\d}[s^{\vp,\eta}, y^{\vp,\eta},-\v_{y}] + \Tilde{B}^{\d}[s^{\vp,\eta},y^{\vp,\eta},V].
\end{align}
We have
\begin{equation}\label{4.37}
 l_{B}\leq l_{B}^{\prime}+(\eta^{2} + \frac{1}{\vp^{2}})O(\d) + O(\frac{(x^{\vp,\eta}-y^{\vp,\eta})^{2}}{\vp^{2}}) +  O(|x^{\vp,\eta}-y^{\vp,\eta}|)+O(\eta^{2}).
\end{equation}
\end{lemma}
\bigskip
We argue now by contradiction by assuming that 
\begin{equation}\label{absurde}
M>0.
\end{equation}
Using  Assumption  $(\textbf{H}_2).4$,
 we get
\begin{align}
& 
0<\frac{r}{2}M\leq 
r M_{\vp,\eta} \leq r(U(t^{\vp,\eta}, x^{\vp,\eta}) - V(s^{\vp,\eta}, y^{\vp,\eta}))\nonumber\\
&\quad \leq F(s^{\vp,\eta}, y^{\vp,\eta}, U(t^{\vp,\eta}, x^{\vp,\eta}), a, \overline{q}, Y, l_{K}^{\prime}, l_{B}^{\prime})   \nonumber\\
&\qquad- F(s^{\vp,\eta}, y^{\vp,\eta}, V(s^{\vp,\eta}, y^{\vp,\eta}), a, \overline{q}, Y, l_{K}^{\prime}, l_{B}^{\prime})\nonumber\\
&\quad = F(s^{\vp,\eta}, y^{\vp,\eta}, U(t^{\vp,\eta}, x^{\vp,\eta}), a, \overline{q}, Y, l_{K}^{\prime}, l_{B}^{\prime})\nonumber\\
&\qquad  -F(s^{\vp,\eta}, y^{\vp,\eta}, U(s^{\vp,\eta}, y^{\vp,\eta}), a, \overline{q}, Y, l_{K}^{\prime}, l_{B}^{\prime}) \nonumber\\
&\qquad + F(s^{\vp,\eta}, y^{\vp,\eta}, U(s^{\vp,\eta}, y^{\vp,\eta}), a, \overline{q}, Y, l_{K}^{\prime}, l_{B}^{\prime}) \nonumber\\
&\qquad - F(s^{\vp,\eta}, y^{\vp,\eta}, U(s^{\vp,\eta}, y^{\vp,\eta}), a, \overline{q}, Y, l_{K}, l_{B}) \nonumber\\
&\qquad + F(s^{\vp,\eta}, y^{\vp,\eta}, U(s^{\vp,\eta}, y^{\vp,\eta}), a, \overline{q}, Y, l_{K}, l_{B}) \nonumber\\
&\qquad - F(t^{\vp,\eta}, x^{\vp,\eta}, U(t^{\vp,\eta}, x^{\vp,\eta}), a, \overline{p}, X, l_{K}, l_{B}) \nonumber\\
&\qquad +F(t^{\vp,\eta}, x^{\vp,\eta}, U(t^{\vp,\eta}, x^{\vp,\eta}), a, \overline{p}, X, l_{K}, l_{B})  \nonumber\\
&\qquad -F(s^{\vp,\eta}, y^{\vp,\eta}, V(s^{\vp,\eta}, y^{\vp,\eta}), a, \overline{q}, Y, l_{K}^{\prime}, l^{\prime}_{B}) \nonumber\\
&\;\;\leq K|U(t^{\vp,\eta}, x^{\vp,\eta}) - U(s^{\vp,\eta}, y^{\vp,\eta})| 
 + F(s^{\vp,\eta}, y^{\vp,\eta}, U(s^{\vp,\eta}, y^{\vp,\eta}), a, \overline{q}, Y, l_{K}, l_{B}) \nonumber\\
 &\qquad - F(t^{\vp,\eta}, x^{\vp,\eta}, U(t^{\vp,\eta}, X^{\vp,\eta}), a, \overline{p}, X, l_{K}, l_{B})\nonumber\\
 &\qquad + (\eta^{2}+\frac{1}{\vp^{2}})O(\d) + O(\frac{(x^{\vp,\eta}-y^{\vp,\eta})^{2}}{\vp^{2}}) + O(|x^{\vp,\eta} - y^{\vp,\eta}|)+O(\eta^{2}). \label{4.38}
\end{align}
We have used here the (nonlocal) ellipticity of $F$, the Lipschitz property of $F$, (\ref{4.25})  and the estimates proven in Lemma \ref{Lestim} and Lemma \ref{Lestim1}.
From the hypothesis on $b$ and $\sigma$, we have:
\begin{align*}
& \s^{2}(x^{\vp,\eta})X-\s^{2}(y^{\vp,\eta})Y \leq \frac{C(x^{\vp,\eta}-y^{\vp,\eta})^{2}}{\vp^{2}} + O(\eta^{2}),\\&b(x^{\vp,\eta})\overline{p} - b(y^{\vp,\eta})\overline{q}\leq \frac{C|x^{\vp,\eta} - y^{\vp,\eta}|}{\vp^{2}} + O(\eta^{2}).
\end{align*}
We thus obtain the  inequality:
\begin{align}
&F(s^{\vp,\eta}, y^{\vp,\eta}, U(s^{\vp,\eta}, y^{\vp,\eta}), a, \overline{q}, Y, l_{K},l_{B}) - F(t^{\vp,\eta}, x^{\vp,\eta}, U(t^{\vp,\eta}, x^{\vp,\eta}), a, \overline{p}, X, l_{K},l_{B})\nonumber \\
&\qquad \leq \frac{C(x^{\vp,\eta}-y^{\vp,\eta})^{2}}{\vp^{2}} + O(\eta^{2}) \nonumber \\
&\qquad + f(t^{\vp,\eta}, x^{\vp,\eta}, U(t^{\vp,\eta}, x^{\vp,\eta}), (p+2\eta^{2})\s(x^{\vp,\eta}),l_{B}) \nonumber  \\
&\qquad -f(s^{\vp,\eta}, y^{\vp,\eta}, U(s^{\vp,\eta}, y^{\vp,\eta}), (p-2\eta^{2})\s(y^{\vp,\eta}),l_{B})\nonumber\\
&\qquad \leq  f(t^{\vp,\eta}, x^{\vp,\eta}, U(t^{\vp,\eta}, x^{\vp,\eta}), (p+2\eta^{2})\s(x^{\vp,\eta}),l_{B}) \nonumber \\
&\qquad  -f(s^{\vp,\eta}, x^{\vp,\eta}, U(t^{\vp,\eta}, x^{\vp,\eta}), (p+2\eta^{2})\s(x^{\vp,\eta}),l_{B}) \nonumber \\
&\qquad+ m_{R}(|x^{\vp,\eta}-y^{\vp,\eta}|(1+(p+2\eta^{2})\s(x^{\vp,\eta})))\nonumber\\
&\qquad  + {K}|U(t^{\vp,\eta}, x^{\vp,\eta}) - U(s^{\vp,\eta}, y^{\vp,\eta})|+O(\frac{(x^{\vp,\eta}-y^{\vp,\eta})^{2}}{\vp^{2}})+O(\eta^{2}). \label{4.39}
\end{align}
The last equality is obtained by some computations  similar to those in \eqref{4.38}.
From (\ref{4.38}), (\ref{4.39}) we get
\begin{align}
& 0<\frac{r}{2}M\leq 
r M^{\vp,\eta}\leq f(t^{\vp,\eta}, x^{\vp,\eta}, U(t^{\vp,\eta}, x^{\vp,\eta}), (p+2\eta^{2})\s(x^{\vp,\eta}),l_{B})  \nonumber \\
&\qquad  -f(s^{\vp,\eta}, x^{\vp,\eta}, U(t^{\vp,\eta}, x^{\vp,\eta}), (p+2\eta^{2})\s(x^{\vp,\eta}),l_{B})  \nonumber \\
&\qquad+ m_{R}(|x^{\vp,\eta}-y^{\vp,\eta}|(1+(p+2\eta^{2})\s(x^{\vp,\eta})) \nonumber\\
&\qquad +K|U(t^{\vp,\eta}, x^{\vp,\eta})-U(s^{\vp,\eta}, y^{\vp,\eta})|+  \nonumber\\
&\qquad +O(\frac{(x^{\vp,\eta}-y^{\vp,\eta})^{2}}{\vp^{2}}) + O(|x^{\vp,\eta}-y^{\vp,\eta}|) + (\eta^{2}+\frac{1}{\vp^{2}})O(\d)+O(\eta^{2}). \label{4.40}
\end{align}
By Lemma \ref{converg}, letting successively $\d,\vp$ and $\eta$ tend to $0$ in  (\ref{4.40}) we obtain that 
$0<\frac{r}{2}M\leq 0$. 
Hence, the assumption $M>0$ made above (see \eqref{absurde}) is wrong.
This  ends the proof of Theorem \ref{8.9}. 
\end{proof}

\begin{coro}[Uniqueness] 
Under the  additional Assumption  $(\textbf{H}_2)$, the value function is  the unique solution of the  obstacle problem $(\ref{4.8})$ in the class of bounded continuous  functions.
\end{coro}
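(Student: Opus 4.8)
The plan is to obtain the corollary as an immediate consequence of the two principal results already established: the existence theorem of Section~\ref{sec3}, which asserts that the value function $u$ of \eqref{DEF} is a viscosity solution of the obstacle problem \eqref{4.8}, and the comparison principle of Theorem~\ref{8.9}. The strategy is the standard one in the viscosity-solution theory: $u$ itself furnishes a solution, so existence is immediate, while uniqueness is read off from the comparison principle applied twice, with the roles of sub- and supersolution interchanged. A preliminary remark is that, under $(\textbf{H}_2)$, the special structure of $f$ imposed in $(\textbf{H}_2).1$ reduces the $L^2_\nu$-valued operator $B$ of Section~\ref{sec3} to the real-valued operator \eqref{defB}, so that the solution property of $u$ proved there is exactly a solution of the problem to which Theorem~\ref{8.9} applies.

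Next I would check that $u$ lies in the admissible class of \emph{bounded} continuous functions, as required by Theorem~\ref{8.9}. Continuity of $u$ is already recorded (Lemma~\ref{conti}). For boundedness I would use that $(\textbf{H}_2)$ makes all the data uniformly bounded: by $(\textbf{H}_2).5$ one has $|h(t,x)|+|g(x)|\leq C$, and by part~(i) of $(\textbf{H}_2).1$ the driver satisfies $|f(t,x,0,0,0)|\leq C$. Through the representation $u(t,x)=\mathrm{ess}\sup_{\tau\in\mathcal{T}_t}\mathcal{E}^{t,x}_{t,\tau}(\xi^{t,x}_\tau)$ and the standard a priori estimates for Lipschitz BSDEs---the same estimates used in the Appendix, which now yield a uniform bound rather than polynomial growth because the inputs are bounded---this gives $|u(t,x)|\leq C$ for all $(t,x)$. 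Hence $u$ is a bounded continuous viscosity solution of \eqref{4.8}, which settles existence.

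For uniqueness, let $v$ be any viscosity solution of \eqref{4.8} in the class of bounded continuous functions. Since $u$ is in particular a subsolution and $v$ a supersolution, Theorem~\ref{8.9} gives $u\leq v$ on $[0,T]\times\mathbb{R}$. Interchanging the roles---$v$ as subsolution and $u$ as supersolution---the same theorem gives $v\leq u$. The two inequalities combine to $u=v$, which is the asserted uniqueness.

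The only point demanding genuine care is the boundedness of $u$: under the weaker Assumption~$\textbf{H}_1$ one obtains merely polynomial growth, which would fail to place $u$ in the class where Theorem~\ref{8.9} is valid. This is precisely why the boundedness conditions $(\textbf{H}_2).5$ and part~(i) of $(\textbf{H}_2).1$ are imposed. Once boundedness is secured, the corollary follows from the two main theorems without any further estimate.
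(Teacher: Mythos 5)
Your proposal is correct and is exactly the argument the paper intends (the corollary is left without an explicit proof there): existence comes from Theorem 3.2, membership of $u$ in the class of bounded continuous functions from Lemma \ref{conti} together with the Appendix remark that boundedness of $f(\cdot,\cdot,0,0,0)$, $h$ and $g$ forces $u$ to be bounded via \eqref{4.7}, and uniqueness from two applications of Theorem \ref{8.9} with the roles of sub- and supersolution exchanged. Your care about the boundedness of $u$ and about the reduction of $B$ to the real-valued operator \eqref{defB} under $(\textbf{H}_2).1$ matches the role these hypotheses play in the paper.
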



\subsection{Proofs  of the  lemmas }\label{PL}

\paragraph{Proof of  Lemma \ref{converg}.}
For $\eta >0$, we introduce the functions:\\
$\Tilde{U}^{\eta}(t,x)=U(t,x)-\eta^2x^2$ and $\Tilde{V}^{\eta}(t,x)=V(t,x)+\eta^2 x^2$. 
Set
$$M^\eta:=\sup_{  [0,T] \times \mathbb{R} }(\Tilde{U}^{\eta}-\Tilde{V}^{\eta}).$$ 
The maximum $M^\eta$ is reached at some point $(\hat{t}^\eta, \hat{x}^\eta)$. 
From the form of  $\psi^{\vp, \eta}$, we have that for fixed $\eta$, 
 there exists a subsequence $(t^{\vp, \eta},s^{\vp, \eta},x^{\vp \eta},y^{\vp, \eta})_\vp$ which converges to some point $(t^\eta, s^\eta, x^\eta, y^\eta)$ when $\vp$ tends to $0$.

Since $M^{\vp, \eta}$ is reached at $(t^{\vp, \eta},s^{\vp, \eta},x^{\vp, \eta},y^{\vp, \eta})$, we have:
\begin{align*}
&(\Tilde{U}^\eta-\Tilde{V}^\eta)(\hat{t}^\eta, \hat{x}^\eta)= (U-V)(\hat{t}^\eta, \hat{x}^\eta)-\eta^2((\hat{x}^\eta)^2+(\hat{y}^\eta)^2)  \leq M^{\vp, \eta} 
\\ & 
 =  U(t^{\vp, \eta},x^{\vp, \eta})-V(s^{\vp, \eta},y^{\vp, \eta})  -\dfrac{(t^{\vp, \eta}-s^{\vp, \eta})^2}{\vp^2}\\ & \qquad  -\dfrac{(x^{\vp, \eta}-y^{\vp, \eta})^2}{\vp^2}-\eta^2((x^{\vp, \eta})^2+(y^{\vp, \eta})^2).
\end{align*}
Setting
$$\overline l_{\eta}:=\lim \sup_{\vp \rightarrow 0} \frac{(x^{\vp, \eta} - y^{\vp, \eta})^{2}}{\vp^{2}}\,,\quad
\underline l_{\eta}:=\lim \inf_{\vp \rightarrow 0} \frac{(x^{\vp, \eta} - y^{\vp, \eta})^{2}}{\vp^{2}}$$
we get
\begin{align}\label{lim1}
 0 \leq \underline l_{\eta} \leq \overline l_{\eta}  \leq (\Tilde{U}^\eta-\Tilde{V}^\eta)(t^{\eta},x^{\eta})-(\Tilde{U}^\eta-\Tilde{V}^\eta)(\hat{t}^\eta, \hat{x}^\eta) \leq 0.
\end{align}
We derive that, up to a subsequence, $\lim_{\vp \rightarrow 0} \frac{(x^{\vp, \eta} - y^{\vp, \eta})^{2}}{\vp^{2}}= 0$ and\\ $\lim_{\vp \rightarrow 0}M^{\vp, \eta}=M^{\eta}.$
Similarly, we get $\lim_{\vp \rightarrow 0} \frac{(t^{\vp, \eta} - s^{\vp, \eta})^{2}}{\vp^{2}}= 0$.

Let us prove that $\lim_{\eta \rightarrow 0}M^{\eta} = M$.
First, note that $M^\eta \leq M,$ for all $\eta$.\\
By definition of $M$, for all $\delta>0$ there exists $(t_\delta, x_\delta ) \in [0,T] \times \mathbb{R}$ such that\\
$M-\delta \leq (U-V)(t_\delta, x_\delta).$
Consequently, we get $$M-2\eta^2x_\delta^2-\delta \leq (U-V)(t_\delta, x_\delta)-2\eta^2x_\delta^2=(\Tilde{U}^\eta-\Tilde{V}^\eta)(t_\delta, x_\delta) \leq M^\eta \leq M.$$
By letting $\eta$ and then $\delta$ tend to $0$, the result follows.  \qed 
\paragraph{ Proof of   Lemma \ref{Lestim}.}
We have:
\begin{align}\label{op1}
{K}^{\d}[t^{\vp,\eta},x^{\vp,\eta},\v_{x}] =\dd\int_{\mathcal{B}_\d}(\frac{1}{\vp^{2}}+\eta^{2})\b^{2}(x^{\vp,\eta},e)\nu(de)\\
\label{op2}
{K}^{\d}[s^{\vp,\eta}, y^{\vp,\eta}, -\v_{y}] = \dd\int_{\mathcal{B}_\d} (-\frac{1}{\vp^{2}}-\eta^{2})\b^{2}(y^{\vp,\eta}, e)\nu(de).
\end{align}
Equations $\eqref{op1}$ and $\eqref{op2}$ imply:
\begin{align}
&\dd {K}^{\d}[t^{\vp,\eta}, x^{\vp,\eta}, \v_{x}]{\leq} {K}^{\d}[s^{\vp,\eta}, y^{\vp,\eta}, -\v_{y}] {+} (\frac{1}{\vp^{2}}{+}\eta^{2})\int_{\mathcal{B}_\d}\b^{2}(y^{\vp,\eta}, e)\nu(de) \nonumber \\
&\dd + (\frac{1}{\vp^{2}}{+}\eta^{2})\int_{\mathcal{B}_\d}\b^{2}(x^{\vp,\eta},e)\nu(de) 
 {\leq} {K}^{\d}[s^{\vp,\eta}, y^{\vp,\eta}, -\v_{y}] {+}(\frac{1}{\vp^{2}}{+}\eta^{2})O(\d).
\label{4.29}
\end{align}
Using  inequality \eqref{4.27} and integrating on $\mathcal{B}_\delta^c$, we obtain:
\begin{align*}
&\Tilde{K}^{\d}[t^{\vp,\eta}, x^{\vp,\eta}, \overline{p},U] = \int_{\mathcal{B}^{c}_\d}\bigg(U(t^{\vp,\eta}, x^{\vp,\eta}+\b (x^{\vp,\eta},e))-U(t^{\vp,\eta}, x^{\vp,\eta})\\
& - (p+2\eta^{2}x^{\vp,\eta}) \b(x^{\vp,\eta}, e)\bigg)\nu(de) \leq \int_{\mathcal{B}^{c}_\d}\bigg(V(s^{\vp,\eta}, y^{\vp,\eta}+\b(y^{\vp,\eta},e)) -V(s^{\vp,\eta}, y^{\vp,\eta})\\
&- (p-2\eta^{2}y^{\vp,\eta})\b(y^{\vp,\eta}, e)\bigg)\nu(de)  + \int_{\mathcal{B}^{c}_\d}\frac{(\b(x^{\vp,\eta},e) - \b(y^{\vp,\eta}, e))^{2}}{\vp^{2}}\nu(de)\\
&+\eta^{2}\int_{\mathcal{B}^{c}_\d}(\b^{2}(x^{\vp,\eta},e) +\b^{2}(y^{\vp,\eta},e))\nu(de)\nonumber\\
&\leq \Tilde{K}^{\d}[s^{\vp,\eta}, y^{\vp,\eta},\overline{q}, V] + O(\frac{(x^{\vp,\eta}-y^{\vp,\eta})^{2}}{\vp^{2}}) +O(\eta^{2}).
\end{align*}
Using \eqref{4.28} and \eqref{4.29}, we derive  \eqref{estim},
which ends the proof of Lemma \ref{Lestim}.
\qed

\paragraph{ Proof of   Lemma \ref{Lestim1}.}
From \eqref{estim5},  
 we derive that:
\begin{align}\label{e1}
 {B}^{\d}[t^{\vp,\eta}, x^{\vp,\eta}, \v_{x}]& = \int_{{\mathcal{B}_\d}}\bigg((\eta^{2}+\frac{1}{\vp^{2}}) \b^{2}(x^{\vp,\eta},e) + \frac{2\b(x^{\vp,\eta},e)}{\vp^{2}}(x^{\vp,\eta}-y^{\vp,\eta})\nonumber\\&+2\eta^{2}x^{\vp, \eta}\b(x^{\vp,\eta},e)\bigg) \g(x^{\vp,\eta},e)\nu(de) \\
{B}^{\d}[s^{\vp,\eta}, y^{\vp,\eta},-\v_{y}]& = \int_{\mathcal{B}_\d}\bigg((-\eta^{2}-\frac{1}{\vp^{2}}) \b^{2}(y^{\vp,\eta},e)+ \frac{2\b(y^{\vp,\eta},e)}{\vp^{2}}(x^{\vp,\eta}-y^{\vp,\eta})\nonumber\\&-2\eta^{2}y^{\vp,\eta}\b(y^{\vp,\eta},e)\bigg) \g(y^{\vp,\eta},e)\nu(de). \label{e2}
\end{align}
After some computations, we obtain:
\begin{align}
& \bigg((\eta^{2}+\frac{1}{\vp^{2}})\b^{2}(x^{\vp,\eta},e) + \frac{2\b(x^{\vp,\eta},e)}{\vp^{2}} (x^{\vp,\eta}-y^{\vp,\eta}) +
 2\eta^{2}x^{\vp,\eta}\b(x^{\vp,\eta},e)\bigg)\g(x^{\vp,\eta},e)\nonumber\\
 & = (-\eta^{2}-\frac{1}{\vp^{2}}) \b^{2}(y^{\vp,\eta},e)\g(y^{\vp,\eta},e) +\frac{2\b(y^{\vp,\eta},e)}{\vp^{2}}(x^{\vp,\eta}-y^{\vp,\eta})\g(y^{\vp,\eta},e) \nonumber\\
 &- 2\eta^{2}y^{\vp,\eta}\b(y^{\vp,\eta},e)\g(y^{\vp,\eta},e)  \nonumber \\
 & + (\eta^{2}+\frac{1}{\vp^{2}})\bigg(\b^{2}(y^{\vp,\eta},e) \g(y^{\vp,\eta},e) + \b^{2}(x^{\vp,\eta},e)\g(x^{\vp,\eta},e)\bigg)\nonumber\\
 & +\frac{2}{\vp^{2}}(x^{\vp,\eta}-y^{\vp,\eta})\bigg(\b(x^{\vp,\eta},e)\g(x^{\vp,\eta},e) - \b(y^{\vp,\eta},e)\g(y^{\vp,\eta},e)\bigg)\nonumber\\
 & +2\eta^{2}\bigg(x^{\vp,\eta}\b(x^{\vp,\eta},e) \g(x^{\vp,\eta},e)+y^{\vp,\eta}\b(y^{\vp,\eta},e) \g(y^{\vp,\eta},e)\bigg). \label{e3}
\end{align}
From \eqref{e1}, \eqref{e2}, \eqref{e3} and using the hypothesis on $\b$ and $\g$, we get:
\begin{equation}\label{4.36}
{B}^{\d}[t^{\vp,\eta}, x^{\vp,\eta}, \v_{x}]\leq {B}^{\d}[s^{\vp,\eta},y^{\vp,\eta},-\v_{y}]+ (\eta^{2}+\frac{1}{\vp^{2}}) O(\d) + O(\frac{(x^{\vp,\eta}-y^{\vp,\eta})^{2}}{\vp^{2}}) +O(\eta^{2}).
\end{equation}
We now estimate  the operator $\Tilde{B}^{\delta}$. Inequality (\ref{4.27}) implies:
\begin{align*}
&\bigg(U(t^{\vp,\eta}, x^{\vp,\eta}+\b(x^{\vp,\eta},e)) - U(t^{\vp,\eta},x^{\vp,\eta})\bigg)\g(x^{\vp,\eta},e)\\
&\qquad \leq \bigg(V(s^{\vp,\eta}, y^{\vp,\eta}+\b(y^{\vp,\eta},e)) - V(s^{\vp,\eta},y^{\vp,\eta})  \\
&\qquad +\frac{|\b(x^{\vp,\eta},e) - \b(y^{\vp,\eta},e)|^{2}}{\vp^{2}} +p( \b(x^{\vp,\eta},e)-\b(y^{\vp,\eta},e)) \\
&\qquad +\eta^{2}(\b^{2}(x^{\vp,\eta}, e)+2x^{\vp,\eta}\b(x^{\vp,\eta},e)+2y^{\vp,\eta}\b(y^{\vp,\eta},e)+\b^{2}(y^{\vp,\eta},e)\bigg)\g(x^{\vp,\eta},e)\\
&\qquad =\bigg(V(s^{\vp,\eta}, y^{\vp,\eta}+\b(y^{\vp,\eta},e)) - V(s^{\vp,\eta}, y^{\vp,\eta})\bigg)\g(y^{\vp,\eta}, e)\\
&\qquad + \bigg(V(s^{\vp,\eta},y^{\vp,\eta}+\b(y^{\vp,\eta},e)) - V(s^{\vp,\eta},y^{\vp,\eta})\bigg)\bigg(\g(x^{\vp,\eta},e) -\g(y^{\vp,\eta},e)\bigg) \\
&\qquad  
+\frac{|\b(x^{\vp,\eta},e)-\b(y^{\vp,\eta},e)|^{2}}{\vp^{2}}\g(x^{\vp,\eta},e) + p  \bigg(\b(x^{\vp,\eta},e) - \b(y^{\vp,\eta},e)\bigg) \g(x^{\vp,\eta},e)\\
&\qquad + \eta^{2}\bigg(\b^{2}(x^{\vp,\eta},e) +2x^{\vp,\eta}\b(x^{\vp,\eta},e)+ 2y^{\vp,\eta}\b(y^{\vp,\eta},e) +\b^{2}(y^{\vp,\eta},e)\bigg) \g(x^{\vp,\eta},e).
\end{align*}
Now, by  \eqref{4.23b}, we have  $|x^{\vp, \eta}|\leq \dfrac{C}{\eta}$ and $|y^{\vp, \eta}| \leq \dfrac{C}{\eta}$.
Hence, using the hypothesis on $\beta, \gamma $ and integrating on 
$ \mathcal{B}_\delta^c$, we get
\begin{equation}\label{4.35}
\Tilde{B}^{\d}[t^{\vp,\eta}, x^{\vp,\eta}, U]\leq \Tilde{B}^{\d}[s^{\vp,\eta}, y^{\vp,\eta}, V] + O(|x^{\vp,\eta} - y^{\vp,\eta}|) + O(\frac{(x^{\vp,\eta}-y^{\vp,\eta})^{2}}{\vp^{2}})+O(\eta^{2}).
\end{equation}
Finally, from \eqref{4.36}, \eqref{4.32} and \eqref{4.35}, we derive inequality \eqref{4.37}.
 \qed

\section{Conclusions}
  In this paper, we  have  studied  the optimal stopping problem for a monotonous dynamic risk measure  defined by a Markovian BSDE with jumps. We have proven that,  under relatively weak hypotheses, the value function is a  viscosity solution  of an obstacle problem   for a
partial integro-differential variational inequality.
To obtain the uniqueness of the solution under appropriate conditions, 
we have proven a comparison theorem, based on the nonlocal version of the Jensen Ishii Lemma, which extends some results established in \cite{1} (Section 5.1, Th.3) to the case of a nonlinear BSDE.

 The  links given in this paper  between optimal stopping problems for BSDEs 
 and obstacle problems for PDEs 
 can be extended to a larger class of  problems. Among them, we can mention generalized Dynkin games with nonlinear expectation  (see \cite{DQS2}),  and  mixed optimal stopping/stochastic control problems (see \cite{DQS1}). However, the latter case  requires to establish  a dynamic 
programming principle,  which does not  follow from the flow property of reflected BSDEs only,  and 
needs rather sophisticated  techniques.

\appendix

\section{Appendix}
\subsection{Some Useful Estimates}

Let $T>0$ be a fixed terminal time.

A map $f: [0,T] \times \Omega  \times \mathbb{R}^2 \times  L_{\nu}^2  \rightarrow \mathbb{R}; (t, \omega, y,z,k) \mapsto f(t, \omega, y,z,k)$ is  said  to be a \textit{ Lipschitz driver } if it is predictable, uniformly Lipchitz with respect to $y,z,k$ and such that $f(t,0,0,0) \in \mathbb{H}^2.$
%
%
%
%

Let $\xi_t^1, \xi_t^2 \in \mathcal{S}^2$. Let $f^1, f^2$ be two admissible Lipschitz drivers with Lipchitz constant $C$.
For $i=1,2$, let $\mathcal{E}^{i}$ be the $f^{i}$-conditional expectation associated with driver $f^{i}$, and let ($Y_t^{i}$) be the adapted process defined for each $t \in [0,T]$,
\begin{align}\label{procV}
Y_t^{i}:={\rm ess} \sup_{\tau \in \mathcal{T}_t} \mathcal{E}^{i}_{t, \tau}(\xi^{i}_\tau).
\end{align}


\begin{proposition} \label{oubli}

For $s \in [0,T]$, denote $\overline{Y}_s=Y_s^1-Y_s^2$, $\overline{\xi}_s=\xi_s^1-\xi_s^2$ and\\ $\overline{f}_s= \sup_{y,z,k}|f^1(s,y,z,k)-f^2(s,y,z,k)|$. Let $\eta, \beta >0$ be such that $\beta\geq \dfrac{3}{\eta}+2C$ and $\eta \leq \dfrac{1}{C^2}$. Then for each $t$, we have:
\begin{equation}\label{eqA.1}
e^{\beta t} \overline{Y}_t^2  \leq e^{\beta T}(\mathbb{E}[\sup_{s \geq t} \overline{\xi_s}^2| \mathcal{F}_t]+ \eta \mathbb{E}[\int_t^T{\overline{f}_s^2}ds|\mathcal{F}_t]) \textsc{ } a.s.
\end{equation}

\end{proposition}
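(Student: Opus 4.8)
The plan is to use the variational representation of $Y^i_t$ as an essential supremum over stopping times and to reduce \reff{eqA.1} to an a priori estimate for (non-reflected) $f$-conditional expectations evaluated at a \emph{common} stopping time. I would bound the positive part $(\overline Y_t)^+=(Y^1_t-Y^2_t)^+$ first and recover the full square by symmetry, observing that the right-hand side of \reff{eqA.1} is unchanged when the indices $1$ and $2$ are exchanged. Since, for fixed $t$, the family $\{\mathcal{E}^1_{t,\tau}(\xi^1_\tau)\}_{\tau\in\mathcal{T}_t}$ is upward directed, there is a sequence $(\tau_n)\subset\mathcal{T}_t$ with $\mathcal{E}^1_{t,\tau_n}(\xi^1_{\tau_n})\uparrow Y^1_t$ a.s.; combining this with $Y^2_t\geq \mathcal{E}^2_{t,\tau_n}(\xi^2_{\tau_n})$ gives, for every $n$,
\[
\big((\mathcal{E}^1_{t,\tau_n}(\xi^1_{\tau_n})-Y^2_t)^+\big)^2\leq \big|\mathcal{E}^1_{t,\tau_n}(\xi^1_{\tau_n})-\mathcal{E}^2_{t,\tau_n}(\xi^2_{\tau_n})\big|^2 .
\]

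The core of the argument is an a priori estimate for the difference of the two conditional expectations over a fixed $\tau\in\mathcal{T}_t$. Denoting by $(\mathcal{E}^i_s,\pi^i_s,l^i_s)_{s\in[t,\tau]}$ the solution of the BSDE with driver $f^i$ and terminal value $\xi^i_\tau$, the difference $\overline{\mathcal{E}}=\mathcal{E}^1-\mathcal{E}^2$ solves a BSDE on $[t,\tau]$ with terminal condition $\overline\xi_\tau$ whose driver I would split as a Lipschitz part in $(\overline{\mathcal{E}},\overline\pi,\overline l)$ plus the perturbation $\delta f_s:=(f^1-f^2)(s,\mathcal{E}^2_s,\pi^2_s,l^2_s)$, which satisfies $|\delta f_s|\leq \overline f_s$. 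Applying It\^o's formula to $s\mapsto e^{\beta s}|\overline{\mathcal{E}}_s|^2$ and taking the conditional expectation given $\mathcal{F}_t$ makes the local-martingale terms vanish and leaves, on the left, the weighted integrals of $\beta|\overline{\mathcal{E}}_s|^2$, $|\overline\pi_s|^2$ and $\|\overline l_s\|_\nu^2$. Bounding the cross terms by Young's inequality, the Lipschitz contributions produce a coefficient $2C+3/\eta$ in front of $\int e^{\beta s}|\overline{\mathcal{E}}_s|^2ds$ (namely $2C$ from the $|\overline{\mathcal{E}}|^2$ term and one factor $1/\eta$ each from the cross terms with $\overline\pi$, with $\overline l$, and with $\delta f$), while the residual $\eta C^2|\overline\pi|^2$ and $\eta C^2\|\overline l\|_\nu^2$ are $\leq |\overline\pi|^2,\ \|\overline l\|_\nu^2$ precisely because $\eta\leq 1/C^2$, so they are absorbed on the left. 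The hypothesis $\beta\geq 3/\eta+2C$ then makes the $|\overline{\mathcal{E}}|^2$ integrals cancel, yielding
\[
e^{\beta t}\big|\mathcal{E}^1_{t,\tau}-\mathcal{E}^2_{t,\tau}\big|^2\leq \mathbb{E}\big[e^{\beta\tau}|\overline\xi_\tau|^2\mid\mathcal{F}_t\big]+\eta\,\mathbb{E}\Big[\textstyle\int_t^\tau e^{\beta s}\overline f_s^2\,ds\mid\mathcal{F}_t\Big].
\]

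To conclude, I would apply this estimate with $\tau=\tau_n$ and use $\tau_n\leq T$ to bound $e^{\beta\tau_n}\leq e^{\beta T}$, $|\overline\xi_{\tau_n}|^2\leq \sup_{s\geq t}\overline\xi_s^2$ and $e^{\beta s}\leq e^{\beta T}$ in the time integral; all three bounds are uniform in $n$. Together with the first display this gives, for every $n$,
\[
e^{\beta t}\big((\mathcal{E}^1_{t,\tau_n}(\xi^1_{\tau_n})-Y^2_t)^+\big)^2\leq e^{\beta T}\Big(\mathbb{E}[\sup_{s\geq t}\overline\xi_s^2\mid\mathcal{F}_t]+\eta\,\mathbb{E}[\textstyle\int_t^T\overline f_s^2\,ds\mid\mathcal{F}_t]\Big).
\]
Since $(\mathcal{E}^1_{t,\tau_n}(\xi^1_{\tau_n})-Y^2_t)^+\uparrow (\overline Y_t)^+$, letting $n\to\infty$ gives the bound for $(\overline Y_t)^+$; exchanging the roles of $1$ and $2$ yields the same bound for $(-\overline Y_t)^+$, and as the right-hand side is symmetric the two combine into $e^{\beta t}\overline Y_t^2\leq e^{\beta T}(\cdots)$, which is \reff{eqA.1}.

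I expect the main obstacle to be the weighted It\^o computation of the second step: carrying the jump martingale through It\^o's formula so that the compensated quadratic term $\int e^{\beta s}\|\overline l_s\|_\nu^2\,ds$ is correctly produced and then used to dominate the cross term $2C|\overline{\mathcal{E}}_s|\,\|\overline l_s\|_\nu$, and checking that the thresholds $\beta\geq 3/\eta+2C$ and $\eta\leq 1/C^2$ are exactly what is needed to absorb all quadratic terms. It is worth stressing that only the Lipschitz property of $f^i$ in $(y,z,k)$ is used here; the monotonicity assumptions play no role in this $L^2$ estimate. A secondary technical point is the directedness argument guaranteeing a single sequence $(\tau_n)$, valid almost surely, along which $\mathcal{E}^1_{t,\tau_n}(\xi^1_{\tau_n})$ increases to $Y^1_t$.
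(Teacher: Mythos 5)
Your proof is correct and follows essentially the same route as the paper: reduce to an a priori $L^2$ estimate for the difference of the two BSDE solutions at a fixed stopping time $\tau$ (the paper simply cites this estimate, Proposition A.4 of Quenez--Sulem, with exactly the constants $\beta\geq 3/\eta+2C$, $\eta\leq 1/C^2$ that your weighted It\^o computation reproduces), and then pass to the essential supremum over $\tau$. Your directedness/positive-part argument is just a more explicit version of the paper's one-line inequality $|\overline{Y}_t|\leq \operatorname{ess\,sup}_\tau|\overline{X}_t^\tau|$, so there is nothing further to add.
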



\begin{proof}
For $i=1,2$ and for each $\tau \in \mathcal{T}_{0}$, let $(X^{i,\tau}$, $\pi_s^{i,\tau}, l_s^{i, \tau})$  be the solution of the BSDE associated with driver $f^i$, terminal time $\tau$ and terminal condition $\xi_{\tau}^i$.
Set  $\overline{X}_s^{\tau}=X_s^{1,\tau}-X_s^{2,\tau}$.

By a priori estimate on BSDEs (see Proposition $A.4$ in \cite{17}), we have:
\begin{align}\label{A.2}
e^{\beta t} (\overline{X}_t^{\tau})^2 & \leq e^{\beta T} \mathbb{E}[\overline{\xi}_{\tau}^2|\mathcal{F}_t]+ \eta \mathbb{E}[\int_t^Te^{\beta s}(f^1(s, X_s^{2,\tau},\pi_s^{2,\tau}, l_s^{2,\tau})\nonumber\\&-f^2(s, X_s^{2,\tau},\pi_s^{2,\tau}, l_s^{2,\tau}))^2 ds| \mathcal{F}_t] \quad \text{ a.s. }
\end{align}
from which we derive that
\begin{equation}\label{A.3}
e^{\beta t} (\overline{X}_t^{\tau})^2  \leq e^{\beta T} (\mathbb{E}[\sup_{s \geq t}\overline{\xi}_{s}^2|\mathcal{F}_t]+ \eta \mathbb{E}[\int_t^T{\overline{f}_s^2 ds}| \mathcal{F}_t]).
\end{equation}
Now, by definition of $Y^{i}$, we have $Y_t^i ={\rm ess} \sup_{\tau \geq t} X_t^{i, \tau}$ a.s. for $i=1,2$. We thus  get $|\overline{Y}_t| \leq{\rm ess} \sup_{\tau \geq t}|\overline{X}_t^{\tau}|$ a.s.
The result follows.\qed \end{proof}

Let $\xi_t \in \mathcal{S}^2$. Let $f$ be a Lipschitz driver  with Lipschitz constant $C>0$.
Set
\begin{align}\label{procV}
Y_t:={\rm ess} \sup_{\tau \in \mathcal{T}_t} \mathcal{E}_{t, \tau}(\xi_\tau)
\end{align}
where $\mathcal{E}$ is the $f$-conditional expectation associated with driver $f$.
\begin{proposition} \label{A.4}
 Let $\eta, \beta >0$ be such that $\beta\geq \dfrac{3}{\eta}+2C$ and $\eta \leq \dfrac{1}{C^2}$. Then for each $t$, we have:
\begin{equation}
e^{\beta t} Y_t^2  \leq e^{\beta T}(\mathbb{E}[\sup_{s \geq t} {\xi_s}^2| \mathcal{F}_t]+ \eta \mathbb{E}[\int_t^T{f(s,0,0,0)^2}ds|\mathcal{F}_t]) \textsc{ } a.s.
\end{equation}
\end{proposition}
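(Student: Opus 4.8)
The plan is to treat this as the single-driver specialization of Proposition~\ref{oubli}. One cannot simply insert $f^2\equiv 0$ and $\xi^2\equiv 0$ into the \emph{final} inequality of Proposition~\ref{oubli}, since the quantity $\overline f_s=\sup_{y,z,k}|f(s,y,z,k)|$ is in general infinite; instead I would reopen the argument of that proposition at the level of the a priori estimate~\eqref{A.2}, where the driver is evaluated \emph{along the solution of the comparison BSDE} rather than bounded by a supremum.

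First I would fix $t$ and, for each $\tau\in\mathcal{T}_t$, denote by $(X^{\tau},\pi^{\tau},l^{\tau})$ the solution of the BSDE with driver $f$, terminal time $\tau$ and terminal condition $\xi_\tau$, so that $\mathcal{E}_{t,\tau}(\xi_\tau)=X^{\tau}_t$ and $Y_t={\rm ess}\sup_{\tau\in\mathcal{T}_t}X^{\tau}_t$. The comparison BSDE associated with the driver $f^2\equiv 0$ and terminal condition $0$ has the trivial solution $(0,0,0)$. Applying the same a priori estimate on BSDEs that was used in the proof of Proposition~\ref{oubli} (Proposition A.4 in~\cite{17}), with this trivial solution as the second one exactly as in~\eqref{A.2}, gives for each $\tau$
\begin{equation*}
e^{\beta t}(X^{\tau}_t)^2 \leq e^{\beta T}\,\mathbb{E}[\xi_\tau^2| \mathcal{F}_t] + \eta\,\mathbb{E}\Big[\int_t^T e^{\beta s} f(s,0,0,0)^2\,ds\,\Big|\,\mathcal{F}_t\Big]\quad\text{a.s.},
\end{equation*}
the constraints $\beta\geq \tfrac{3}{\eta}+2C$ and $\eta\leq \tfrac{1}{C^2}$ being precisely those under which this estimate holds. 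The driver term reduces here to $f(s,0,0,0)$ exactly because the second BSDE solution is identically zero.

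Next I would bound the right-hand side uniformly in $\tau$: using $\xi_\tau^2\leq \sup_{s\geq t}\xi_s^2$ and $e^{\beta s}\leq e^{\beta T}$ for $s\leq T$, and factoring out $e^{\beta T}$, I obtain for every $\tau$
\begin{equation*}
e^{\beta t}(X^{\tau}_t)^2 \leq e^{\beta T}\Big(\mathbb{E}[\sup_{s\geq t}\xi_s^2| \mathcal{F}_t] + \eta\,\mathbb{E}\Big[\int_t^T f(s,0,0,0)^2\,ds\,\Big|\,\mathcal{F}_t\Big]\Big)=:R_t\quad\text{a.s.},
\end{equation*}
where $R_t$ is $\mathcal{F}_t$-measurable and independent of $\tau$. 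Taking square roots gives $|X^{\tau}_t|\leq e^{-\beta t/2}\sqrt{R_t}$ a.s. for each $\tau$; since the bound does not depend on $\tau$, passing to the essential supremum over $\mathcal{T}_t$ preserves the inequality, so ${\rm ess}\sup_{\tau}|X^{\tau}_t|\leq e^{-\beta t/2}\sqrt{R_t}$ a.s.

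Finally, as in Proposition~\ref{oubli}, I would use $|Y_t|\leq {\rm ess}\sup_{\tau}|X^{\tau}_t|$, which follows by distinguishing the cases $Y_t\geq 0$ and $Y_t<0$ in $Y_t={\rm ess}\sup_\tau X^{\tau}_t$, combine it with the previous bound to get $|Y_t|\leq e^{-\beta t/2}\sqrt{R_t}$, and then square and multiply by $e^{\beta t}$ to reach the claimed inequality. The only genuinely delicate point is the passage from the per-$\tau$ estimate to the estimate for $Y_t$: one must keep the driver contribution as $f(s,0,0,0)$ rather than a supremum, which is why the final statement of Proposition~\ref{oubli} cannot be quoted verbatim, and one must exploit that $R_t$ is uniform in $\tau$ so that the essential supremum over the uncountable family $\mathcal{T}_t$ respects the almost sure bound.
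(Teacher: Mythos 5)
Your proof is correct and follows essentially the same route as the paper: the paper likewise applies the a priori estimate \eqref{A.2} (not the final inequality of Proposition \ref{oubli}) with $f^1=f$, $\xi^1=\xi$, $f^2=0$, $\xi^2=0$, so that the driver difference evaluated along the trivial second solution reduces to $f(s,0,0,0)$, and then passes to the essential supremum over $\tau$. Your remark that the final statement of Proposition \ref{oubli} cannot be quoted verbatim because $\sup_{y,z,k}|f(s,y,z,k)|$ may be infinite is exactly the point, and the remaining details you supply (uniformity of the bound in $\tau$, $|Y_t|\leq {\rm ess}\sup_\tau |X_t^\tau|$) match what the paper leaves implicit in ``The result follows.''
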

\begin{proof}
Let $X_t^{\tau}$ be the solution of the BSDE associated with driver $f$, terminal time $\tau$ and terminal condition $\xi_{\tau}$. By applying inequality \eqref{A.2} with $f^1=f$, $\xi_1=\xi$, $f^2=0$ and $\xi^2=0$, we get:
\begin{equation}\label{A.5}
e^{\beta t} (X_t^{\tau})^2 \leq e^{\beta T} \mathbb{E} [\xi_{\tau}^2|\mathcal{F}_t]+ \eta \mathbb{E} [\int_t^T{e^{\beta s}(f(s,0,0,0))^2}|\mathcal{F}_t].
\end{equation}
The result follows. \end{proof}
\begin{remark}
If the drivers satisfy Assumption 3.1  in \cite{17}, then $Y$ (resp. $Y^{i}$) is the solution of the RBSDE associated with driver $f$   (resp.$f^{i}$) and obstacle  $\xi$ (resp. $\xi^{i}$). Hence  the above estimates provide some new estimates on RBSDEs.  Note that $\eta$ and $\beta$ are universal constants, i.e. they do not depend on $T$, $\xi, \xi^1, \xi^2, f, f^1, f^2$. This was not the case for the estimates given in the previous literature (see e.g.  \cite{10}).
\end{remark}

\subsection{Some Properties of the Value Function $u$}

We prove below the continuity and polynomial growth of the function $u$ defined by (\ref{DEF}).
\begin{lemma}\label{conti}
The function $u$ is continuous in $(t,x)$.
\end{lemma}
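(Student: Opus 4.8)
The plan is to prove continuity at an arbitrary point $(t,x)$ by taking $(t',x') \to (t,x)$ and, assuming without loss of generality $t \le t'$, estimating $|u(t,x) - u(t',x')|$ through a combination of flow estimates for the forward SDE \eqref{SDE} and the a priori estimate for reflected BSDEs of Proposition \ref{oubli} (whose constants $\eta,\beta$ are universal). Since $u$ is deterministic, I would write $u(t,x) - u(t',x') = Y_t^{t,x} - Y_{t'}^{t',x'}$ and insert the intermediate term $Y_{t'}^{t,x}$, which by the flow property of the SDE and uniqueness of the solution of \eqref{markRBSDE} equals $Y_{t'}^{t', X_{t'}^{t,x}} = u(t', X_{t'}^{t,x})$ almost surely; taking expectations of the a.s.\ identity and using the triangle inequality gives
\begin{equation*}
|u(t,x) - u(t',x')| \le \mathbb{E}\big[|Y_t^{t,x} - Y_{t'}^{t,x}|\big] + \mathbb{E}\big[|Y_{t'}^{t,x} - Y_{t'}^{t',x'}|\big],
\end{equation*}
the first term measuring the time oscillation of $Y^{t,x}$ over $[t,t']$ and the second comparing the two value processes at the common time $t'$.

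For the first term I would use the dynamics of the reflected BSDE \eqref{markRBSDE} on $[t,t']$, writing $Y_t^{t,x} - Y_{t'}^{t,x}$ as the sum of the driver integral $\int_t^{t'} f(r,X_r^{t,x},Y_r^{t,x},Z_r^{t,x},K_r^{t,x})\,dr$, the increment $A_{t'}^{t,x} - A_t^{t,x}$, and the Brownian and compensated-Poisson martingale increments. Each contribution tends to $0$ in $L^2$ as $t' \downarrow t$: the martingale parts by the Itô isometry together with $Z^{t,x} \in \mathbb{H}^2$, $K^{t,x} \in \mathbb{H}^2_\nu$; the driver integral by the square-integrability of $f(r,X_r^{t,x},Y_r,Z_r,K_r)$, a consequence of Assumption $\textbf{H}_1$ and the moment bounds on $X^{t,x}$; and the increasing-process increment by dominated convergence, since $A^{t,x}$ is continuous with $A_t^{t,x}=0$ and $\mathbb{E}[(A_T^{t,x})^2]<\infty$.

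For the second term I would note that the restriction of $Y^{t,x}$ to $[t',T]$ solves the reflected BSDE on $[t',T]$ with driver $f(\cdot,X^{t,x}_\cdot,\cdot)$, obstacle $\bar h(\cdot,X^{t,x}_\cdot)$ and terminal value $g(X_T^{t,x})$, while $Y^{t',x'}$ solves the analogous equation driven by $X^{t',x'}$. Applying Proposition \ref{oubli} on $[t',T]$ (with $f^1=f(\cdot,X^{t,x}_\cdot,\cdot)$, $f^2=f(\cdot,X^{t',x'}_\cdot,\cdot)$, which are Lipschitz drivers with the same constant by $\textbf{H}_1$(ii)) bounds $\mathbb{E}[|Y_{t'}^{t,x} - Y_{t'}^{t',x'}|^2]$ by
\begin{equation*}
C\Big(\mathbb{E}\big[\sup_{s \ge t'} |\bar h(s,X_s^{t,x}) - \bar h(s,X_s^{t',x'})|^2\big] + \mathbb{E}\big[\int_{t'}^T \sup_{y,z,k}|f(s,X_s^{t,x},y,z,k) - f(s,X_s^{t',x'},y,z,k)|^2\,ds\big]\Big),
\end{equation*}
with $C$ independent of the data. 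The key input is the flow estimate $\mathbb{E}[\sup_{s \in [t',T]}|X_s^{t,x} - X_s^{t',x'}|^2] \to 0$ as $(t',x') \to (t,x)$, together with uniform polynomial moment bounds on the family $X^{t,x}$ — both standard consequences of the Lipschitz assumptions on $b,\sigma,\beta$ established in the Appendix.

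The main obstacle I anticipate is passing from this flow convergence to the convergence of the two right-hand terms, since $g$ and $h$ are only continuous with polynomial growth (not Lipschitz), and $f$ is only continuous in $x$ uniformly in $(y,z,k)$. I would therefore argue by uniform integrability: along a subsequence $X^{t',x'} \to X^{t,x}$ uniformly on $[t',T]$ almost surely, so continuity of $\bar h$ and of $f(s,\cdot,y,z,k)$ yields pointwise (in $\omega$) convergence of the integrands to $0$, while the uniform polynomial moments of $X$ provide the $L^2$-uniform integrability needed to pass the limit under $\mathbb{E}$ and $\int$; a standard subsequence argument then upgrades this to convergence of the whole family. Collecting the two terms gives $|u(t,x)-u(t',x')| \to 0$, which is the claim.
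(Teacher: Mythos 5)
Your proposal runs on the same engine as the paper's proof --- Proposition \ref{oubli} (with its universal constants) combined with the $L^2$-flow continuity of $(t,x)\mapsto X^{t,x}$ and a dominated-convergence argument to handle the fact that $h$, $g$ and $x\mapsto f(t,x,\cdot)$ are only continuous with polynomial growth --- but the decomposition is genuinely different. The paper never splits off a time increment of $Y$: it applies Proposition \ref{oubli} a single time, on all of $[0,T]$, to the two value processes associated with the indicator-truncated drivers $\mathbf{1}_{[t,T]}(s)f(s,X_s^{t,x},\cdot)$ and $\mathbf{1}_{[t_n,T]}(s)f(s,X_s^{t_n,x_n},\cdot)$, so that the time shift is absorbed into the comparison of drivers and obstacles and the only inputs are $\mathbb{E}[\sup_{s}|\bar h(s,X_s^{t_n,x_n})-\bar h(s,X_s^{t,x})|^2]\to 0$ and $\mathbb{E}\int_0^T(\bar f^n_s)^2\,ds\to 0$. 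Your route instead inserts $Y_{t'}^{t,x}$ and controls the time oscillation of $Y^{t,x}$ by hand from the dynamics of \eqref{markRBSDE}. Both are legitimate; the paper's version is shorter precisely because it never has to touch $Z^{t,x}$, $K^{t,x}$ or $A^{t,x}$ individually, while yours is more explicit about where each term comes from. (A small simplification available to you: since $u$ is deterministic you may put the expectation outside the time-increment term, $|\mathbb{E}[Y_t^{t,x}-Y_{t'}^{t,x}]|$, and the two martingale increments then vanish identically.)

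The one genuine gap is the phrase ``assuming without loss of generality $t\le t'$''. The two cases are not symmetric, because $(t,x)$ is fixed and $(t',x')$ varies. When $t'<t$ the natural intermediate time is $t$ relative to the process started at $t'$, so the oscillation term becomes $\mathbb{E}\bigl[|Y_{t'}^{t',x'}-Y_{t}^{t',x'}|\bigr]$, which involves the objects $Z^{t',x'}$, $K^{t',x'}$ and above all $A^{t',x'}$ of the \emph{varying} reflected BSDE on the shrinking interval $[t',t]$. Your argument for the first term (continuity of $A^{t,x}$ with $A_t^{t,x}=0$, square integrability of $Z^{t,x}$, dominated convergence) exploits that these objects are attached to one fixed equation; it does not by itself produce a modulus of continuity uniform over the family $(t',x')$, and in particular the smallness of $\mathbb{E}[A_t^{t',x'}]$ as $t'\uparrow t$ is exactly the kind of statement that needs a uniform a priori estimate rather than pathwise continuity. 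To close this you must either prove such uniform estimates (for $\mathbb{E}[A_t^{t',x'}]$ and $\mathbb{E}\int_{t'}^{t}|Z_r^{t',x'}|^2\,dr$), or sidestep the issue as the paper does by posing both problems on the common interval $[0,T]$ from the outset via the truncated drivers.
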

\begin{proof}
It is sufficient to show that, when $(t_n, x_n) \rightarrow (t,x)$,
$
 |u(t_n, x_n)-u(t,x)| \rightarrow 0$. \\
Let $\bar h$ be the map defined by $ \bar h (t,x) =h(t,x)$ for $t<T$ and $\bar h (T,x) =g(x)$,
so that, for each $(t,x)$, we have
 $\xi^{t,x}_s = \bar h(s, X^{t,x}_s)$, $0 \leq s \leq T$ a.s.
 By applying  Proposition  \ref{oubli} with\linebreak  $X_s^1=X_s^{t_n,x_n}$, $X^2_s= X_s^{t,x}$, $f^1(s, \omega, y,z,q):= \textbf{1}_{[t,T]}(s) f(s, X_s^{t,x}(\omega),y,z,q)$ and\\   $f^2(s, \omega, y,z,q):= \textbf{1}_{[t_n,T]}(s) f(s, X_s^{t_n,x_n}(\omega),y,z,q)$,  we obtain:
$$
 |u(t_n, x_n)-u(t,x)|^2
 \leq K_{C,T} \mathbb{E}[\sup_{0\leq s\leq T}|\overline{h}(s, X_{s}^{t_n, x_n}) - \overline{h}(s, X_{s}^{t,x})|^2+ \int_{0}^{T}(\overline{f}_s^n)^2],$$ where
\begin{align*}
\begin{cases}
K_{C,T}:=e^{(3C^2+2C)T}\max(1, \dfrac{1}{C^2})\\ \overline{f}_s^n(\omega):=\sup_{y,z,q}|\textbf{1}_{[t,T]}f(s, X_s^{t,x}(\omega),y,z,q)-\textbf{1}_{[t_n,T]}f(s, X_s^{t_n,x_n}(\omega),y,z,q)|.
\end{cases}
\end{align*}
The continuity of $u$ is then a consequence of  the following convergences as $n\to \infty$:
\begin{align*}&\mathbb{E}(\sup_{0\leq s\leq T}|\overline{h}(s, X_{s}^{t,x})- \overline{h}(s, X_{s}^{t_n}( x_n))|^{2})\to 0\\
& \mathbb{E}[\int_{0}^{T}(\overline{f}_s^n)^{2}ds]\to 0,
\end{align*}
which follow from the Lebesgue's theorem, using the continuity assumptions and  polynomial growth of $f$ and $h$ . \qed
\end{proof}
\begin{lemma}\label{polyn}
The function $u$ has at most polynomial growth at infinity.
\end{lemma}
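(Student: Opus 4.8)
The plan is to derive the polynomial growth of $u$ from the a priori estimate of Proposition \ref{A.4}, combined with the standard polynomial moment bounds on the state process $X^{t,x}$. Recall that $u(t,x)=Y_t^{t,x}=\mathrm{ess}\sup_{\tau\in\mathcal{T}_t}\mathcal{E}_{t,\tau}^{t,x}(\xi_\tau^{t,x})$, where the obstacle is $\xi_s^{t,x}=\bar h(s,X_s^{t,x})$ and the driver is $f(s,X_s^{t,x},y,z,k)$. First I would apply Proposition \ref{A.4} in this Markovian setting, with $f(s,X_s^{t,x},0,0,0)$ playing the role of $f(s,0,0,0)$ there. Since $Y_t^{t,x}=u(t,x)$ is deterministic, the left-hand side $e^{\beta t}u(t,x)^2$ is constant, so taking expectations and using the tower property removes the conditioning on $\mathcal{F}_t$ and yields
\[
e^{\beta t}u(t,x)^2\leq e^{\beta T}\Big(\mathbb{E}\big[\sup_{t\leq s\leq T}\bar h(s,X_s^{t,x})^2\big]+\eta\,\mathbb{E}\big[\textstyle\int_t^T f(s,X_s^{t,x},0,0,0)^2\,ds\big]\Big),
\]
with the universal constants $\beta,\eta$ of that proposition.

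Next I would invoke the growth assumptions. The bound $|h(t,x)|\leq C(1+|x|^p)$ from \eqref{4.3} and the polynomial growth of $g$ give $|\bar h(s,x)|\leq C(1+|x|^p)$, while Assumption $\textbf{H}_1$(i) gives $|f(s,x,0,0,0)|\leq C(1+|x|^p)$. Squaring and enlarging the constant, both terms on the right are dominated by $C\,\mathbb{E}\big[\sup_{t\leq s\leq T}(1+|X_s^{t,x}|^{2p})\big]$. It therefore suffices to establish the moment estimate
\[
\mathbb{E}\Big[\sup_{t\leq s\leq T}|X_s^{t,x}|^{2p}\Big]\leq C(1+|x|^{2p}),
\]
with $C$ independent of $t\in[0,T]$, from which $|u(t,x)|\leq C(1+|x|^p)$ follows immediately.

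This moment estimate is the main technical point and would be proved by the classical route for jump-diffusion SDEs, working directly from the integral form \eqref{SDE}. I would take the $2p$-th power, split into drift, Brownian and compensated-Poisson contributions, and estimate the two martingale parts by the Burkholder--Davis--Gundy inequality. The jump term is controlled using $|\beta(x,e)|\leq C(1\wedge|e|)$ together with the integrability $\int_{\mathbb{R}^*}(1\wedge e^2)\nu(de)<\infty$, which guarantees finiteness of the relevant $\nu$-integrals; the drift is handled by the linear growth of $b$ inherited from its Lipschitz property. After collecting the terms into $\mathbb{E}[\sup_{t\leq r\leq s}|X_r^{t,x}|^{2p}]$ and applying Gronwall's lemma on $[t,T]$, the polynomial bound follows. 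The one point requiring care is the uniformity of $C$ in $t$, which holds because every estimate is carried out over the fixed horizon $[0,T]$, so the Gronwall constant depends only on $T$ and the Lipschitz/growth constants of $b,\sigma,\beta$.
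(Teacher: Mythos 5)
Your proof is correct and follows essentially the same route as the paper: apply the a priori estimate of Proposition \ref{A.4} to bound $u(t,x)^2$ by the expectations of $\sup_s \bar h(s,X_s^{t,x})^2$ and $\int f(s,X_s^{t,x},0,0,0)^2\,ds$, then conclude via the polynomial growth assumptions and a standard moment bound on $X^{t,x}$. If anything you are slightly more careful than the paper, which quotes only the second-moment estimate $\mathbb{E}[\sup_s|X_s^{t,x}|^2]\leq C'(1+x^2)$ where the $2p$-th moment version (which you state and sketch via Burkholder--Davis--Gundy and Gronwall) is what is actually needed for general $p$.
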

\begin{proof} By applying Prop. $\ref{A.4}$ , we obtain the following estimate:
\begin{equation}\label{4.7}
u(t,x)^2 \leq K_{C,T}(\mathbb{E}(\int_{0}^{T} f(s, X_{s}^{t,x},0,0,0)^{2}ds + \sup_{0\leq s\leq T}\overline{h}(s,X_{s}^{t,x})^{2}).
\end{equation}
Using now the hypothesis of polynomial growth on $f,h,g$  and the standard estimate
\begin{equation*}
\mathbb{E}[\sup_{0\leq s\leq T}|X_{s}^{t,x}|^{2}] \leq C'(1+x^{2}),
\end{equation*}
we derive that there exist $\bar C \in \mathbb{R}$ and $p\in \mathbb{N}$ such that $|u(t,x)| \leq \bar C(1+x^{p}),\ \forall t\in [0,T]$, $ \forall x\in \mathbb{R}.$ 
\qed
\end{proof}
\begin{remark}
By \eqref{4.7}, if $(t,x) \mapsto f(t,x, 0, 0), \; h$ and $g$ are bounded, then $u$ is bounded.
\end{remark}
\subsection{An Extension of the Comparison Result for BSDEs with Jumps}
We provide here an extension of the comparison theorem for BSDEs given in  \cite{16} which  formally
states that if two  drivers $f_1, f_2$ satisfy $f_1 \geq f_2 + \vp$, then 
the associated solutions $X^1$ and $X^2$ satisfy $X_0^1 >   X_0^2 $. 
\begin{proposition}\label{A.4}
Let $t_0 \in [0,T]$ and let $\theta$ be a stopping time such that $\theta > t_0$ a.s.\,\\
Let $\xi_1$ and $\xi_2$ $\in$ $L^2({\cal F}_{\theta})$.
Let $f_1$ be  a driver.
Let $f_2$ be a Lipschitz driver. 
For $i=1,2$, let $(X^i_t, \pi^i_t, l^i_t)$ be a solution in $S^{2} \times \H^{2} \times \H_{\nu}^{2}$ of the BSDE
\begin{equation}\label{eq7}
-dX^i_t  = \displaystyle   f_i (t, X^i_t, \pi^i_t, l^i_t) dt - \pi^i_t dW_t - \int_{\mathbb{R}^*} l^i_t(u) \Tilde{N} (dt,du); \quad X^i_{\theta}  = \xi_i.
\end{equation}
Assume that there exists a bounded predictable process $(\gamma_t)$ such that $dt\otimes dP \otimes \nu(de)$-a.s.\\$ \gamma_t(e) \geq -1  \;\; \text{ and }
\;\; |\gamma_t(e) | \leq C(1 \wedge |e|)$,
and such that 
\begin{equation}\label{autre}
f_2(t, X^2_t, \pi^2_t, l^1_t) - f_2(t,X^2_t,\pi^2_t,l^2_t) \geq \langle \gamma_t\,,\, l^1_t- l^2_t \rangle_\nu , \;\;  t_0 \leq  t \leq \theta,\; \; dt\otimes dP \text{ a.s.}
\end{equation}
Suppose also
 that
\begin{align*}
&\xi_1 \geq \xi_2   \text{ a.s. } \\
&f_1 (t, X^1_t, \pi^1_t, l^1_t) \geq f_2 (t, X^1_t, \pi^1_t, l^1_t) +\varepsilon, \;\; t_0 \leq t \leq \theta,\; \; dt\otimes dP \text{ a.s.}
\end{align*}
 where $\varepsilon $ is a real constant.  
 Then, 
 $$X_{t_0}^1-X_{t_0}^2 \geq \varepsilon \alpha  \quad {\rm a.s.}\,$$
 where $\alpha$ is a non negative ${\cal F}_{t_0}$-measurable r.v. which does not depend on $\varepsilon$, with 
 $P(\alpha >0) >0$.
  \end{proposition}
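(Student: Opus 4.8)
The plan is to reduce the strict comparison to a linearization argument combined with a standard linear‑BSDE representation. Writing $\bar X_t := X^1_t - X^2_t$, $\bar\pi_t := \pi^1_t - \pi^2_t$ and $\bar l_t := l^1_t - l^2_t$, I would first subtract the two BSDEs in \eqref{eq7} to obtain a BSDE for $\bar X$ with terminal condition $\xi_1-\xi_2\ge 0$ and driver
$$
f_1(t,X^1_t,\pi^1_t,l^1_t) - f_2(t,X^2_t,\pi^2_t,l^2_t).
$$
The key step is to decompose this difference so as to isolate the gap $\varepsilon$. Using the hypothesis $f_1(t,X^1_t,\pi^1_t,l^1_t) \ge f_2(t,X^1_t,\pi^1_t,l^1_t) + \varepsilon$, I write the driver difference as $\varepsilon$ plus a remainder that is controlled by the Lipschitz continuity of $f_2$ and the assumption \eqref{autre}.

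\textbf{Linearization.} Because $f_2$ is Lipschitz, I can introduce bounded predictable coefficients $(a_t)$, $(b_t)$ built from finite‑increment (Hadamard‑type) ratios so that
$$
f_2(t,X^1_t,\pi^1_t,l^1_t) - f_2(t,X^2_t,\pi^2_t,l^2_t) = a_t\,\bar X_t + b_t\,\bar\pi_t + \langle \gamma_t, \bar l_t\rangle_\nu + \delta_t,
$$
where $|a_t|,|b_t|\le C$, the jump coefficient $(\gamma_t)$ is exactly the process furnished by hypothesis \eqref{autre} (so that the remaining $l$‑term is nonnegative), and $\delta_t \ge 0$ captures the inequality in \eqref{autre}. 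Combining with the $\varepsilon$ gap, $\bar X$ then solves a linear BSDE of the form
$$
-d\bar X_t = \bigl(a_t \bar X_t + b_t\bar\pi_t + \langle\gamma_t,\bar l_t\rangle_\nu + \varepsilon + \rho_t\bigr)dt - \bar\pi_t\,dW_t - \int_{\R^*}\bar l_t(u)\,\tilde N(dt,du),
$$
with $\rho_t \ge 0$ and $\bar X_\theta = \xi_1-\xi_2\ge 0$.

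\textbf{Representation and conclusion.} The final step is to apply the linear‑BSDE representation formula: introduce the adjoint (exponential) process $(\Gamma_{t_0,t})$ solving the linear SDE with coefficients $a_t$, $b_t$ and jump coefficient $\gamma_t$. The conditions $\gamma_t(e)\ge -1$ and $|\gamma_t(e)|\le C(1\wedge|e|)$ are precisely what guarantee that $\Gamma_{t_0,\cdot}$ is a well‑defined nonnegative (doléans‑Dade exponential) process, so the representation gives
$$
\bar X_{t_0} = \EB\Bigl[\Gamma_{t_0,\theta}(\xi_1-\xi_2) + \int_{t_0}^\theta \Gamma_{t_0,s}(\varepsilon + \rho_s)\,ds \,\Big|\,\Fc_{t_0}\Bigr] \ge \varepsilon\,\EB\Bigl[\int_{t_0}^\theta \Gamma_{t_0,s}\,ds\,\Big|\,\Fc_{t_0}\Bigr].
$$
Setting $\alpha := \EB[\int_{t_0}^\theta \Gamma_{t_0,s}\,ds\mid\Fc_{t_0}]$ yields $\bar X_{t_0}\ge \varepsilon\alpha$, and $\alpha$ is manifestly $\Fc_{t_0}$‑measurable, nonnegative, and independent of $\varepsilon$. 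Since $\theta > t_0$ a.s.\ and $\Gamma_{t_0,\cdot}>0$, the integral is strictly positive on a set of positive probability, giving $P(\alpha>0)>0$.

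\textbf{Main obstacle.} The delicate point is the positivity and integrability of the adjoint process $\Gamma$ in the presence of jumps: one must verify that the jump term $\langle\gamma_t,\bar l_t\rangle_\nu$ linearizes correctly and that the constraint $\gamma_t\ge -1$ keeps $\Gamma$ strictly positive, so that the nonnegative terms $\varepsilon+\rho_t$ genuinely propagate a strict lower bound rather than being cancelled. This is exactly where the structural hypotheses on $\gamma_t$ (and assumption \eqref{autre}) are used, and it is the part of the argument that goes beyond the classical Brownian comparison theorem.
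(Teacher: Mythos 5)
Your proposal is correct and follows essentially the same route as the paper: the paper simply outsources your linearization step to inequality (4.22) in the proof of the comparison theorem of Quenez--Sulem (2013), which directly yields $X^1_{t_0}-X^2_{t_0}\geq e^{-CT}\,\EB[\int_{t_0}^{\theta}H_{t_0,s}\,\varepsilon\,ds\,|\,\Fc_{t_0}]$ with $H$ the same nonnegative Dol\'eans-Dade exponential you call $\Gamma$, and then sets $\alpha:=e^{-CT}\,\EB[H_{t_0,\theta}(\theta-t_0)\,|\,\Fc_{t_0}]$. The only cosmetic caveat is that with $\gamma_t(e)\geq -1$ the exponential is nonnegative rather than strictly positive, but $P(\alpha>0)>0$ still follows exactly as you and the paper conclude.
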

 
 \begin{proof}
 From inequality (4.22) in the proof of the Comparison Theorem in \cite{16}, we derive that 
 \begin{equation*}\label{eq9999}
  X_{t_0}^1-X_{t_0}^2 \geq  e^{-CT}\mathbb{E} \left[  \int_{t_0}^{\theta} H_{t_0,s}\,  \varepsilon \,ds | \mathcal{F}_{t_0} \right]\;
 \quad {\rm a.s.}\,,
\end{equation*}
where $C$ is the Lipschitz constant of $f_2$, and 
$(H_{t_0,s})_{s \in [t_0,T]}$ is the square integrable non negative martingale satisfying
\begin{equation*}
d H_{t_0,s}  = \displaystyle H_{t_0,s^-} \left[ \beta_s d W_s + \int_{\mathbb{R}^*} \gamma_s(u) \Tilde{N}(ds,du)\right] ; \;\; 
H_{t_0,t_0}  = 1,
\end{equation*}
 $(\beta_s)$ being  a predictable process bounded by $C$. We get
 $$ X_{t_0}^1-X_{t_0}^2 \geq  e^{-CT}\varepsilon \, \mathbb{E} \left[  H_{t_0,\theta}\, (\theta - t_0) 
    | \mathcal{F}_{t_0} \right]\;  \quad {\rm a.s.}\,$$
 Since $\theta > t_0$ a.s.\,, we have $ H_{t_0,\theta}\, (\theta - t_0)  \geq 0$ a.s. and 
 $P( H_{t_0,\theta}\, (\theta - t_0)  >0) >0$.
Setting $\alpha:= e^{-CT} \, \mathbb{E} \left[  H_{t_0,\theta}\, (\theta - t_0) 
    | \mathcal{F}_{t_0} \right]$, the result follows.
 \qed
 \end{proof}

\end{document}